\newtheorem{thm}{Theorem}[section]
\newtheorem{prop}[thm]{Proposition}
\newtheorem{lemma}[thm]{Lemma}
\newtheorem*{claim}{Claim}
\newtheorem{cor}[thm]{Corollary}
\theoremstyle{definition}
\newtheorem{defin}[thm]{Definition}
\renewcommand\P{{\mathcal P}}
\newcommand{\den}[2][]{\llbracket #2 \rrbracket_{#1}}
\renewcommand\phi\varphi 
\newcommand\dep{\mathtt{D}}
\newcommand\ruleskip{\qquad\qquad}
\newcommand\V{v}
\newcommand{\uBox}{\Box}
\newcommand{\iland}{\mathrel{\vphantom{\land}\mathpalette\iland@\relax}}
\newcommand{\ilor}{\mathrel{\vphantom{\lor}\mathpalette\ilor@\relax}}
\newcommand{\ilnot}{\mathord{\vphantom{\lnot}\mathpalette\ilnot@\relax}}
\newcommand{\ibot}{\mathord{\vphantom{\bot}\mathpalette\ibot@\relax}}
\newcommand{\itop}{\mathord{\vphantom{\top}\mathpalette\itop@\relax}}
\newcommand{\dBox}{\mathord{\vphantom{\Box}\mathpalette\dBox@\relax}}
\newcommand{\iland@}[2]{\ooalign{\raisebox{.15ex}{\rotatebox[origin=c]{-90}{$\m@th#1\leqslant$}}}}
\newcommand{\ilor@}[2]{\ooalign{\raisebox{.15ex}{\rotatebox[origin=c]{-90}{$\m@th#1\geqslant$}}}}
\newcommand{\ibot@}[2]{\ooalign{$\m@th#1\LLbot$\cr\kern.2em$\m@th#1\LLbot$}}
\newcommand{\itop@}[2]{\ooalign{$\m@th#1\LLtop$\cr\kern.2em$\m@th#1\LLtop$}}
\newcommand{\ilnot@}[2]{\ooalign{\raisebox{.2ex}{$\m@th#1\LLnot$}\cr\raisebox{-.2ex}{$\m@th#1\LLnot$}}}
\newcommand{\dBox@}[2]{\ooalign{$\m@th#1\Box$\cr\kern.15em\raisebox{.5ex}{\scalebox{0.9}{$\m@th#1\raisebox{-.5pt}{\scalebox{0.7}{$<$}}$}}}}
\newcommand{\one}{\mathbbm 1}
\newcommand{\two}{\mathbbm 2}
\newcommand{\btop}{1}
\newcommand{\bbot}{0}
\newcommand{\bor}{+}
\newcommand{\band}{\cdot}
\newcommand{\bnot}{-}
\newcommand{\bleftrightarrow}{\leftrightarrow}
\let\LLnot\lnot
\let\LLor\lor
\let\LLand\land
\let\LLtop\top
\let\LLbot\bot
\newcommand{\etop}{\LLtop}
\newcommand{\ebot}{\LLbot}
\newcommand{\eor}{\LLor}
\newcommand{\eand}{\LLand}
\newcommand{\enot}{\LLnot}
\newcommand\Fm{\ensuremath{\mathtt{Fm}}}
\newcommand\Lb{\ensuremath{\mathtt{Lb}}}
\newcommand\of{\mathbin:}
\newcommand\ekv\leftrightarrow
\newcommand{\up}{\mathop{\uparrow}}
\newcommand{\dwn}{\mathop{\downarrow}}
\newcommand{\NB}{\mathord{\text{NB}}}
\newcommand{\slnot}{\mathord{{\sim}}}
\renewcommand{\oe}{\ensuremath{\eor\mathtt{E}}}
\newcommand{\oi}{\ensuremath{\eor\mathtt{I}}}
\renewcommand{\ae}{\ensuremath{\eand\mathtt{E}}}
\newcommand{\ai}{\ensuremath{\eand\mathtt{I}}}
\newcommand{\ioe}{\ensuremath{\mathord{\ilor}\mathtt{E}}}
\newcommand{\ioi}{\ensuremath{\mathord{\ilor}\mathtt{I}}}
\newcommand{\iae}{\ensuremath{\mathord{\iland}\mathtt{E}}}
\newcommand{\iai}{\ensuremath{\mathord{\iland}\mathtt{I}}}
\renewcommand{\ne}{\ensuremath{\enot\mathtt{E}}}
\renewcommand{\ni}{\ensuremath{\enot\mathtt{I}}}
\newcommand{\ine}{\ensuremath{\ilnot\mathtt{E}}}
\newcommand{\ini}{\ensuremath{\ilnot\mathtt{I}}}
\newcommand{\be}{\ensuremath{\ebot\mathtt{E}}}
\newcommand\taut{\ensuremath{\mathtt{taut}}}
\newcommand\sub{\ensuremath{\mathtt{sub}}}
\newcommand{\raa}{\ensuremath{\mathtt{RAA}}}
\begin{document}

\title{The Propositional Logic of Team Properties}

\author{Fredrik Engstr\"om}
\address{Department of Philosophy, Linguistics and Theory of Science,
University of Gothenburg, Box 100, 40530 G\"oteborg, Sweden}
\email{fredrik.engstrom@gu.se}

\author{Orvar Lorimer-Olsson}
\address{Department of Philosophy, Linguistics and Theory of Science,
University of Gothenburg, Box 100, 40530 G\"oteborg, Sweden}
\email{orvar.lorimer.olsson@gu.se}

\thanks{The authors contributed equally to this work.}

\date{\today}

\keywords{Dependence logic, team semantics, algebraic logic}

\begin{abstract}
Since its introduction by Hodges and refinement by Väänänen, team semantic
constructions have been used to generate expressively enriched logics
preserving some desirable properties, such as compactness or decidability. By
contrast, these logics fail to be substitutional, limiting any algebraic
treatment, and rendering schematic uniform proof systems impossible. This
shortcoming can be attributed to \emph{the flatness principle}, commonly
adhered to when generating team semantics. Investigating the formation of
team semantics from algebraic semantics, and disregarding the flatness
principle, we present \emph{the Logic of Team Properties}, LTP, a
substitutional logic in which important propositional team logics are
axiomatisable as fragments. Starting from classical propositional logic and
Boolean algebras, we give a semantics for LTP by considering the algebras
that are powersets of Boolean algebras $B$, i.e., of the form $\P B$,
equipped with \emph{internal} (pointwise) and \emph{external}
(set-theoretic) connectives. Furthermore, we present a well-motivated sound
and complete labelled natural deduction system for LTP.
\end{abstract}

\maketitle

\section{Introduction}

Team semantics was invented by Hodges \cite{Hodges1997} to give the
Independence Friendly Logic (IF-Logic) of Hintikka and Sandu
\cite{Hintikka1989} a compositional semantics. Team semantics was later used
by Väänänen to define Dependence Logic \cite{Vaeaenaenen2007,vaananen2010dependence}, a formalism
extending first-order logic in which functional dependencies between variables
are explicitly expressed by atomic formulas. The intended meaning of these
atomic formulas $\dep(\bar x,y)$ is that the value of the variable $y$ is
functionally determined by the values of the finitely many variables $\bar x$.
Even though Dependence Logic uses only first-order quantifiers it can express
any existential second-order property or statement. 
This additional
expressive power comes from the semantic clauses for disjunction and existential
quantification: a disjunction may split a team into two subteams, while an
existential quantifier may choose, for each assignment in a team, a nonempty set
of possible witnesses. These choices amount to existential quantification over
relations or functions, which explains why second-order expressive
power arises from apparently first-order syntax.

Since this invention many logics based on team semantics have been introduced and
investigated, such as Independence Logic, propositional dependence logics and
modal dependence logics, see for example \cite{Vaeaenaenen2008,Yang2016,gradel2013dependence}.
We also relate this work to inquisitive logics \cite{Ciardelli2011}, which have separate origins.
These logics are formed by extending classical logic (or any intermediate
logic) with a notion of inquisitive propositions, and their standard
semantics have been found to be directly interpretable as a form of
propositional team semantics \cite{Yang2016}.

\subsection{Lifting Tarskian semantics to team semantics}

In the classical Tarskian semantics of first-order logic the denotation of a
formula, given a structure, is defined to be the set of all assignments that
satisfy the formula. Similarly, the denotation of a propositional formula is
the set of valuations satisfying the formula, and the denotation of a modal
logic formula, given a Kripke model, is the set of all worlds satisfying the
formula. Thus, the denotation of a formula in this classical setting is an
element of $\P X$, the powerset of the set $X$ of all assignments, all
valuations or all worlds, respectively.

Team semantics of first-order, propositional and modal logic lifts the
denotations of formulas to be sets of subsets of $X$, i.e., elements of $\P\P
X$, instead of elements of $\P X$. Thus, instead of asking if a single
assignment, valuation or world satisfies a formula, team semantics asks if a
\emph{set} of assignments, valuations or worlds satisfies a formula. Such sets
are called \emph{teams}.

This powerset lift makes it possible to define atoms and connectives that have
no corresponding definition in the classical setting. The dependence atom of
Propositional Dependence Logic is one such example: $$ X \vDash \dep
(\bar p,q) \quad\text{iff} \quad \forall s,s' \in X ( s(\bar p)=s'
(\bar p) \rightarrow s(q)=s'(q) ),$$ where $X$ is a set of propositional
valuations and $\bar p$ is a finite sequence of propositional variables.

In many standard presentations of team semantics, the semantics of the
classical fragment is required to agree with the usual pointwise semantics.
This requirement is often called the \emph{flatness principle}:
\begin{quote}
    A team satisfies a formula precisely when each of its individual members
    satisfies the formula in the usual classical sense.
\end{quote}
This principle applies only to formulas of the original language, before the
addition of dependence atoms or other genuinely team semantic connectives. For
such formulas, satisfaction by a single assignment, valuation, or world is
already defined in the underlying classical semantics.

In terms of denotations of formulas the flatness principle naturally
translates to the equation
\begin{equation}\label{eq:powerset}{}
    \den[h]{\phi} =\P \den[c]{\phi}
\end{equation} 
where $\den[h]{\phi}$ is the team-semantic denotation of $\phi$ ($h$ for
Hodges) and $\den[c]{\phi}$ is the ordinary Tarskian denotation of $\phi$ ($c$
for classical).\footnote{Here and in the remainder of the
introduction, notation of the form $\den{\varphi} $ is used
informally for the denotation of $\varphi$ under the indicated semantics; the formal
semantics of LTP is given in Definition~\ref{def_satisfaction}.}

Thus, for formulas in the original language, flatness identifies the
team-semantic denotation with the full powerset of the corresponding
classical denotation. This does more than determine the behaviour of the
classical connectives; it also restricts the possible denotations of atomic
formulas. Each atom is forced to denote the collection of all subteams of its
classical truth set, rather than an arbitrary collection of teams.

\subsection{Substitutionality and logics}

Dependence Logic and its variants have some desirable properties such as
compactness, Löwenheim-Skolem properties \cite{Vaeaenaenen2007} and that
first-order consequences of theories can be axiomatised \cite
{Kontinen2013} to name a few. But they are not substitutional; for example,
in Propositional Dependence Logic, $p \lor p \vDash p$ holds as usual.
However, substituting the dependence atom $\dep(p)$ for $p$ in the
entailment invalidates it: $$ \dep(p) \lor \dep(p) \nvDash \dep(p)$$ The
intuitive meaning of $\dep(p)$ is that the propositional variable $p$ is assigned a 
constant value. The team $X=\{s,s'\}$, where $s(p)=0$ and $s'(p)=1$ satisfies
$\dep(p)\lor\dep(p)$ as the team can be split into two singleton teams in
which the propositional variable $p$ has constant values. However, $X$ does
not satisfy $\dep(p)$ and the entailment is therefore invalid.

In general, a logic is substitutional if $\varphi \vDash \psi$ implies
$\phi[\sigma/p] \vDash \psi[\sigma/p]$, i.e., an entailment is not invalidated by
substituting a formula $\sigma$ for an atom $p$. Substitutionality was already
used by Bolzano to define the concept of validity. In \cite[§147]{Bolzano1837}
Bolzano defines \emph{universally valid propositions} as propositions with all
their variants true. In his terminology a variant is nothing but a
substitutional instance. Bolzano, and many after him, thus took
substitutionality not only as an important property for a logic, but as the
basic principle for the concept of logical validity.

Any attempt to give algebraic semantics to a logic that lacks substitutionality will necessarily fail in the strict sense used in abstract algebraic logic \cite{Font16}. In this setting, algebraic structures are treated with a uniform domain and classified by equational statements. Any such categorisation is necessarily closed under substitution. In non-substitutional logics, however,
formulas may exhibit type-sensitive behaviour, meaning that formulas with
syntactically similar structure may belong to semantically distinct
categories. Some approaches to deal with this for team logics that are \emph{downward closed} have been developed adjusting the algebraic semantics \cite{Bezhanishvili2021,Quadrellaro2020,Quadrellaro2021, Puncochar2017,Puncochar2021}. These are described in Section \ref{subsub: others approaches}. 

This challenge arises, for example, in propositional dependence logics due to
the way their semantics are defined via powerset constructions. Specifically,
the powerset lift of Tarskian semantics to team semantics (as described
in \eqref{eq:powerset}) introduces constraints on the denotations of atomic
formulas. Since these constraints are not preserved under arbitrary
substitution, the resulting logics generally fail to be substitutional.

In this paper we will therefore generalise away from using a specific lift.
Instead we will, in a certain sense, quantify over all possible lifts of the
atoms: 
Given a function $\mathcal L : \P X \to \P\P X$ that lifts the denotations
of \emph{atoms} from a Tarskian setting to a team-semantic setting, we
extend to all formulas $\phi$ compositionally to give team-semantic
denotations $\den[\mathcal L]{\phi}$ to all formulas $\phi$ in the logic. The
induced map $\phi \mapsto \den[\mathcal L]{\phi}$ is then the unique
homomorphism from the absolutely free term algebra of formulas into the
algebra of sets of teams determined by $\mathcal L$.
We then define
the entailment relation $\phi \vDash \psi$ to hold if $\den[\mathcal L]{\phi}
\subseteq \den[\mathcal L]{\psi}$ for all functions $\mathcal L : \P X \to
\P\P X$.

Thus, the function $\phi \mapsto \den[\mathcal L]{\phi}$ is a
homomorphism from the absolutely free term algebra of formulas into a specific
algebraic structure of sets of teams. Describing a logic by quantifying over 
all homomorphisms from a term language into an algebraic structure is exactly 
the starting point for constructing algebraic semantics.

\subsection{Lifting algebraic semantics}

The semantics of classical propositional logic can be defined in terms of
Boolean algebras. One may even say that classical propositional logic
\emph{is} the logic of Boolean algebras. The set of propositional formulas is
the term algebra generated by atoms using the Boolean operators $\bot, \lnot,
\lor$, and $\land$; and the semantics of propositional logic can be stated
using homomorphisms from this term algebra to a Boolean algebra. A formula is
a tautology if its image under any such homomorphism is the top element of the
Boolean algebra. This is the starting point when we define the Logic of Team Properties,
or LTP for short: Lifting the algebraic semantics for classical propositional
logic to the setting of teams. A team in this setting is nothing but a set of
elements of the Boolean algebra.

The connectives (and the corresponding operators on sets of teams) we are
interested in are the ordinary Boolean connectives $\ebot,\enot, \eand,\eor$
that correspond to the empty set, the complement, the union and the
intersection. We will call these connectives and the corresponding
operators \emph{external} Boolean connectives and operators. We will also add
the \emph{internal} connectives and operators that are defined by pointwise
application of  the operators (which will be denoted by  $+$, $\cdot$,
$-$) of the Boolean algebra, for
example the internal disjunction $\ilor$ is defined by $$A \ilor B = \set
{a \bor b | a \in A, b \in B},$$ where $A$ and $B$ are subsets of the Boolean
algebra. We will use blackboard boldface versions of the Boolean connectives
to denote these internal connectives: $\ibot,\ilnot,\ilor,\iland$. In
summary, the external connectives, $\enot, \eor,\eand$ are the set-theoretic
operators of complement, union and intersection; and the internal
connectives, $\ilnot, \ilor, \iland$ are defined by pointwise application of
the operators from the underlying Boolean algebra.

When the underlying Boolean algebra $B$ is $\P 2^{\mathbb N}$ with the
ordinary set-theoretic Boolean operators, the connective $\ilor$ corresponds
precisely to the ``splitjunction'' used in Dependence Logic, and $\iland$ is,
in the downward-closed setting of Dependence Logic, equivalent to the
conjunction in Dependence Logic.

In this setting, a team can be identified simply with an element of $B$, while
the semantic values of formulas then naturally become elements of $\P B$,
that is, sets of teams. Classical propositional logic may be seen as
reasoning about valuations, and admits an algebraic semantics in terms of the
Boolean algebra $B = \P(2^{\mathbb{N}})$ of sets of valuations. Under team
semantics, however, the basic semantic units are no longer single points,
valuations, or assignments, but teams.

Accordingly, once a Boolean algebra $B$ is taken as the space of teams,
formulas no longer denote single teams but collections of teams, i.e.,
elements of $\P B$. From this perspective, $\P B$ is not an auxiliary
construction but the natural semantic universe for reasoning about teams.
Moreover, by moving to the full powerset $\P B$, rather than to some
distinguished family of team-properties arising from a particular lift, we
obtain a logic that is genuinely about arbitrary collections of teams. In
this sense the present system is the propositional logic of team properties,
and is therefore naturally called the \emph{Logic of Team Properties} (LTP).

\subsection{Related constructions} 

An algebraic structure generated by the powerset of a Boolean algebra with
internal connectives is not new to mathematics and logic. Brink
\cite{Brink1984,Brink1986,Brink1993} contributes to these investigations and
calls them \emph{power algebras}, whereas Goldblatt calls them \emph{complex
algebras} \cite{Goldblatt1989} referring to a subset of an algebraic group as
a complex. It is also worth noting that these algebras are special cases of
\emph{Boolean algebras with operators} as described by J\'onsson and Tarski
\cite{Jonsson1993}. These play a notable role in the algebraic treatment of
modal logics, see \cite{Venema2007}.

In other related work, Priest has utilised the powerset lift in order to
investigate resulting families of \textit{plurivalent logics}
\cite{Priest2017}, an effort elaborated on by Humberstone in
\cite{Humberstone2014} coining the term \textit{power matrices} for the
resulting constructions. In these papers, semantics of multivalued logics is
lifted into evaluations on subsets of possible truth values, and the logical
connectives are interpreted in terms of pointwise operations. They do,
however, not include any connectives relating to the set-theoretic Boolean
operations on the powerset algebra.

In particular, in \cite{Goranko99} Goranko and Vakarelov use powersets of
Boolean algebras, referred to as \textit{hyperboolean algebras}, in order to
define what they call the \textit{hyperboolean modal logic}, HBML. This
construction treats a Boolean algebra, expressed as a partial order, as a
Kripke frame and utilises the Boolean structure to define modal operators.
The algebraic counterpart of the logic HBML is thus the powerset algebra of
the original Boolean algebra, with the modal operators defined as the internal
pointwise operations on the underlying algebra.




The logic of team properties that we define in this paper has different
motivation and origin than that of HBML, but building on the same class of
models. It is easy to see that the connectives of LTP and HBML are
interdefinable such that the two logics share validities and can in this
sense be viewed as having the same theorems. However, where Goranko and
Vakarelov only define HBML in terms of validity with a proof system
fundamentally structured around an elaborately defined \textit
{difference modality} and an \textit{only operator}, we define LTP for a full
entailment notion and present a labelled natural deduction system for which
the rules directly correspond to the basic connectives of the logic. Even so,
the correspondence between the two logics is strong enough for some important
properties of HBML presented in \cite{Goranko99} to also apply to LTP, see
Section \ref{sec:canonical}. In Section \ref{sub:modal_operators} we also
discuss Knudstorp's undecidability result for HBML, which transfers to
LTP as well.

\subsubsection{Related algebraic approaches in the downward-closed case} \label{subsub: others approaches}

The logic of team properties that we propose is a logic of arbitrary team
properties, that is, any predication on teams is viable to be represented as
a proposition in the logic. More work on algebraisation of propositional team
logics has been developed for logics for which the definable properties are
all \emph{downward-closed}, that is, if a team $X$ satisfies a property $P$,
so do all subteams $Y\subseteq X$. This includes both Propositional
dependence logic and the intermediate family of Inquisitive logics. A key
observation for this case is that the set of downward-closed collections of
teams (and more generally the collections of downward-closed subsets of
Boolean algebras) form a Heyting algebra under the appropriate operations,
making it possible to give algebraic semantics to these logics in terms of
classes of Heyting algebras with additional operators. However, as the logics
are not closed under substitution, some concessions need to be made
diverging from standard algebraic semantics.

Pun{\v{c}}och{\'a}{\v{r}} \cite{Puncochar2017,Puncochar2021} gives such
algebraic semantics for inquisitive logics by restricting the class of
homomorphisms into the algebras to those for which atomic formulas are mapped
to \emph{prime elements} of the Heyting algebras. In the specific models
formed from reinterpreting team semantics this exactly corresponds to
restricting atomic formulas to be represented by full powersets of teams in
line with the flatness principle.

Quadrellaro \cite{Quadrellaro2020,Quadrellaro2021} pushes the algebraisation
further by exploiting that, in virtue of the flatness principle, the
collection of  powersets of teams form a closed algebra under the operations
of the traditional logic that the team semantics conservatively extends
(for example, they form a Boolean algebra in Propositional Dependence Logic). This
allows him to describe the class of admissible homomorphisms as those where
atomic formulas are mapped into this particular type of substructure, making
it possible to define the semantics in terms of algebras together with a
unary predicate picking out an appropriate substructure. A limitation of this
approach is that the restricted class of homomorphisms can only map formulas
to a limited corner of the considered algebras referred to as \emph{the
core}, being the part of the algebra reachable through formulas from the
designated elements or substructure. Consequently the algebras of these
logics are left somewhat unspecified outside of this core.
 
In later work Bezhanishvili, Grilletti and Quadrellaro \cite{Bezhanishvili2021} 
found a more traditional way of dealing with this specific class of logics.
Starting from Quadrellaro's earlier semantics, the functional interpretation
of negation $(\neg)$ can be chosen so that its image is contained within the
designated substructure that identifies the original non-extended logic. This
allows for an axiomatisation of the properties of this substructure in terms
of formulas with negated atomic formulas. By then adding axioms only for
atomic formulas, satisfying a double-negation elimination ($\lnot \lnot p\to
p $), they manage to identify the family of intermediate inquisitive logics
in terms of \emph{negative variants} of intermediate logics. This makes it
possible to present nice axiomatic theories for these logics, but the
semantic counterpart still needs a demarcation of \emph{regular} elements
(satisfying $\lnot \lnot x = x$) for which the mapping of atomic formulas is
restricted.  
  
The setup and methods employed for the above-mentioned results are directly
guided by the specific logics they study and are set out to give algebraic
semantics for. As such, they lean strongly on the downward-closure and the
flatness principle for their construction and have to admit limitations in
the algebraisation connected to the non-substitutionality of the logics
discussed. 

Our approach is slightly different. We primarily attempt to describe a new
substitutional team logic fit for algebraic methods, but that is expressive
enough to define important existing team logics through axiomatisation.
Interestingly enough, the resulting axiomatisation we present bears some
resemblance to the axiomatisation for atoms given by Bezhanishvili et al.,
and it works essentially by identifying a class of appropriate homomorphisms
exactly in line with Pun{\v{c}}och{\'a}{\v{r}}'s semantic construction.  In
contrast to that work, we do not consider all intermediate logics as starting
points but focus only on those logics extending classical propositional
logic. At the same time, the valuational team logics we are able to
axiomatise are not limited to those with the downward-closure property. We
return to a more detailed comparison with the aforementioned results in the
concluding reflections in Section \ref{section:Conclusion}.

\subsection{Structure of this paper} 

In the next section we formally introduce the syntax and semantics of the
Logic of Team Properties, LTP. The semantics is defined in an algebraic manner in terms
of homomorphisms into algebraic structures based on Boolean algebras. 
In the same section we also introduce a labelled
natural deduction system for LTP. The formulas are decorated by labels and the
labels are themselves classical propositional formulas. By including rules in
the deduction system identifying classically equivalent formulas as equivalent
labels we establish the role of the labels as references to elements in a
Boolean algebra, and this paves the way for the completeness proof via
Lindenbaum--Tarski algebras presented in the next section.

Section 3 is devoted to prove the completeness of the natural deduction system
and the consequences that can be observed by a more careful investigation of
the proof. This section also includes results regarding non-canonicity and
adequacy of sets of Boolean algebras.

In Section 4 we introduce some important definable connectives in LTP. In
particular we define the \textit{strict negation} that, apart from being an
interesting type of negation, will be important in Section 6. We also define
a universal $\uBox$-modality in Definition~\ref{def:modal-operators}, which
plays an important role in Theorem~\ref{Def_to_ax}, where definable classes
of homomorphisms are converted into LTP-axiomatisations.

In Section 6 we finally utilise the strict negation to define classes of
homomorphisms, and give axioms expressing the strong propositional team
logic, PT$^+$ in \cite{Yang2017}, as a part of LTP. In this way we establish
the connection between LTP and the propositional team logics found in the
literature with semantics based on teams of valuations, here referred to as
\textit{valuational team semantics}. These results establish LTP as a well
motivated, substitutional, and expressively rich propositional team logic that
is highly relevant for a better understanding of team semantics for
propositional logics. 

In the final section of the paper we reflect generally on the construction
that we have presented, and discuss some further topics of investigation that
are implicated by our work.

\section{The Logic of Team Properties}\label{sec:logic}

Let us now define the Logic of Team Properties, LTP, which is the main object of study in
this paper. Let $B$ be a Boolean algebra; we allow $B$ to be the
trivial one-element Boolean algebra. To avoid confusion with the operators
on $\P B $ that we introduce below, we denote the operators and
constants of $B$ by $\bbot, \bnot, \bor,$ and $\band$.

The powerset $\P B$ is the domain of a Boolean algebra with
respect to the usual set-theoretic operators $(\emptyset,
\cdot^C, \cup, \cap)$, which we from now will denote by $\ebot$, $\enot$,
$\eor$, and $\eand$. These structures also have some additional natural
operators; the internal, or pointwise, operators: $\ibot$, $\ilnot$, $\ilor$,
and $\iland$. 

\begin{defin} 
Let $B$ be a Boolean algebra. We define the \emph{internal} operators in
$\P B$ as follows:
\begin{align*}
    \ibot &= \set{\bbot}, \\
    \ilnot X &= \set{\bnot a | a \in X},\\
    X \ilor Y &= \set{a \bor b | a \in X, b \in Y}, \text{ and}\\
    X \iland Y &=\set{a \band b | a \in X, b \in Y},
\end{align*} where $X,Y \subseteq B$.
\end{defin}

Formulas of LTP are elements in the term algebra, the absolutely free algebra,
generated by the propositional variables $P_0$, $P_1$, \dots\ using the
external Boolean connectives $\ebot$, $\enot$, $\eor$, $\eand$ and the
internal Boolean connectives $\ibot$, $\ilnot$, $\ilor$, $\iland$.\footnote{Observe that the symbols
$\ebot$, $\enot$, $\eor$, and $\eand$ are used at two levels: syntactically,
they denote the operations of the term algebra $\Fm$, and semantically, they
denote the corresponding set-theoretic operators on $\P B$. But note also that, from
the algebraic perspective, the connectives are themselves operations of the
(absolutely free) term algebra of formulas.}

\begin{defin}
The term algebra of formulas of LTP is denoted by $\Fm$ and defined by the following grammar:
$$\phi ::= \ebot \mathrel| \ibot \mathrel| P_i \mathrel| \enot \phi
\mathrel| \ilnot \phi \mathrel| \phi \eor \phi \mathrel| \phi \eand \phi
\mathrel| \phi \ilor \phi \mathrel| \phi \iland \phi. $$
\end{defin}

Entailment is defined as the subset relation for the images of the formulas
under arbitrary homomorphisms. This is in accordance with how entailment in
Dependence Logic is defined, in which $\Gamma$ entails $\psi$ if every team
that satisfies all formulas in $\Gamma$ also satisfies $\psi$.

\begin{defin}\label{def_entailment}
    Let $\Delta$ be a set of formulas. Then $\Delta \vDash \psi$ iff for all
    Boolean algebras $B$ and all homomorphisms $H:\Fm \to \P B$:
    $$\bigcap_{\phi \in\Delta} H(\phi) \subseteq H(\psi).$$
\end{defin}

Note that this definition of entailment corresponds to what, in the algebraic
logic literature, is called \emph{semilattice-based logics}, see for example
Section 7.2 in \cite{Font16} for details. 

The semantics of LTP admits an equivalent formulation in more traditional
terms. Instead of viewing formulas as interpreted via homomorphisms into $\P
B$, one can evaluate them directly on teams, in the style of standard team
semantics. We now present this alternative formulation.

\begin{defin}\label{def_satisfaction}
    Let $B$ be a Boolean algebra, $\V: \mathbb N \to \P B$ and $b\in B$. We define $B,\V,b \vDash \varphi$ by the following recursive definition. 
    \begin{itemize}
        \item $B,\V,b \nvDash \ebot$,
        \item $B,\V, b \vDash P_i $ if $b \in \V(i)$,
        \item $B,\V,b \vDash \enot \varphi$ if $B,\V,b \nvDash \varphi$,
        \item $B,\V,b \vDash \ilnot \varphi$ if $B,\V,\bnot b \vDash \varphi$,
        \item $B,\V,b \vDash \varphi \eor \psi$ if $B,\V,b \vDash \varphi$ or  $B,\V,b \vDash \psi$,
        \item $B,\V,b \vDash \varphi \eand \psi$ if $B,\V,b \vDash \varphi$ and $B,\V,b \vDash \psi$,
        \item $B,\V,b \vDash \varphi \ilor \psi$ if there are $b_1,b_2 \in B$ such that $b=b_1 \bor b_2$, $B,\V,b_1 \vDash \varphi$ and  $B,\V,b_2 \vDash \psi$, and
        \item $B,\V,b \vDash \varphi \iland \psi$ if there are $b_1,b_2 \in B$ such that $b=b_1 \band b_2$, $B,\V,b_1 \vDash \varphi$ and  $B,\V,b_2 \vDash \psi$.
\end{itemize}
\end{defin}

Thus LTP may be viewed in the usual way as a logic with models and a
satisfaction relation. The algebraic semantics presented above is then the
corresponding denotational formulation, obtained by assigning to each formula
the set of all teams that satisfy it.

Comparing the definition of the connectives in LTP with corresponding definitions in valuational team semantics, note that $\ilor$ corresponds to ``splitjunction'' and
$\enot$ to the classical negation used in propositional team logics \cite{Yang2016,Yang2017}. Observe
also that if $A$ and $B$ are downward-closed subsets of a Boolean algebra
with respect to its partial order $\leq$,\footnote{A subset $A$ of a set $X$ ordered by $\leq$
	is \emph{downward-closed} if for all elements $x\in X$, if $x\leq a$ for any
	$a\in A$ then also $x\in A$.} then $A \eand B =A\iland B$, and so both $\eand$
and $\iland$ can be seen as generalisations of the conjunction of Dependence
logic.

\begin{prop} \label{prop:hom-sat-equivalence}
Assume $B$ is a Boolean algebra, $\V:\mathbb N \to \P B$, $b \in B$, and
$H:\Fm \to \P B$ is a homomorphism such that $H(P_i)=\V(i)$. Then
$B,\V,b \vDash \varphi$ iff $b \in H(\varphi)$.
\end{prop}

\begin{proof} 
The proof proceeds by induction on $\varphi$.  The base case, where
$\phi$ is $P_i$, is immediate: $b \in H(P_i)$ if and only if $b \in \V
(i)$, which holds if and only if $B,\V,b \vDash P_i$.

The inductive steps are routine. We illustrate one direction in the case where
$\phi$ is $\psi \ilor \delta$. Suppose that
$B,\V,b \vDash \psi \ilor \delta$. Then there exist $b_1, b_2 \in B$ such
that $b_1 \bor b_2 = b$, with $B,\V,b_1 \vDash \psi$ and
$B,\V,b_2 \vDash \delta$. By the inductive hypothesis, $b_1 \in H(\psi)$ and
$b_2 \in H(\delta)$, and hence $b = b_1 \bor b_2 \in H(\psi \ilor \delta)$.
\end{proof}

The following proposition is an immediate consequence.

\begin{prop} \label{prop:entailment-satisfaction-equivalence}
$\Delta \vDash \psi$ iff for all Boolean algebras $B$, all
 $\V : \mathbb N \to \P B$, and all $b \in B$, if $B,\V,b \vDash \phi$ for all
 $\phi \in \Delta$, then $B,\V,b \vDash \psi$.
\end{prop}

\begin{proof} 
For the left-to-right direction, assume that $\Delta \vDash \psi$, and let
$B$, $\V$, and $b \in B$ be given. Suppose that $B,\V,b \vDash \phi$ for all
$\phi \in \Delta$. Let $H : \Fm \to \P B$ be the unique homomorphism such
that $H(P_i) = \V(i)$ for each $i$. 
By Proposition~\ref{prop:hom-sat-equivalence}, $b \in H(\phi)$ for all
$\phi \in \Delta$, and hence, by Definition~\ref{def_entailment},
$b \in H(\psi)$. Applying Proposition~\ref{prop:hom-sat-equivalence} once more,
we conclude that $B,\V,b \vDash \psi$.

The converse direction is proved similarly.
\end{proof}

The reader should note that when $B$ is the Boolean algebra of sets of
propositional valuations, i.e., $B=\P (2^{\mathbb N})$ and $b \in B$, then
$b$ is a team in the sense of standard propositional team semantics.
Moreover, if $\V(i) = \P´ \set{s \in 2^{\mathbb N} | s(i) =1 }$ then
$B,\V,b \vDash \varphi$ iff the team $b$ satisfies $\varphi$ in standard
Propositional Dependence Logic.\footnote{The precise translation between the
logical systems is given in Section~\ref{sec:valuational}, in particular in
Definition~\ref{def:PTplus} and Proposition~\ref{directThm}.}

The algebraic definition of LTP in terms of homomorphisms makes closure under
uniform substitutions immediate. Indeed, substitution instances of formulas
are precisely their images under homomorphisms $\sigma : \Fm \to \Fm$, and
closure under substitution follows directly from the definition of the
entailment relation.

\begin{thm}[Substitutionality]
    If $\Delta \vDash \psi$ then $\sigma(\Delta) \vDash \sigma(\psi)$ for
    every algebra homomorphism $\sigma:\Fm \to \Fm$.
\end{thm}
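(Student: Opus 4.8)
The plan is to use the observation made just above the statement: a substitution $\sigma : \Fm \to \Fm$ and a semantic interpretation $H : \Fm \to \P B$ are both algebra homomorphisms, so their composite $H \circ \sigma : \Fm \to \P B$ is again a homomorphism of the same kind. Since the entailment relation $\vDash$ quantifies over \emph{all} such homomorphisms into powersets of Boolean algebras, the result should fall out simply by applying the hypothesis to this composite. Concretely, I would unwind the definition of $\vDash$ on the conclusion and recognise the intersection appearing there as one governed by $H \circ \sigma$.

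First I would fix an arbitrary Boolean algebra $B$ together with an arbitrary homomorphism $H : \Fm \to \P B$; by the definition of entailment it then suffices to establish the inclusion $\bigcap_{\chi \in \sigma(\Delta)} H(\chi) \subseteq H(\sigma(\psi))$. Writing $\sigma(\Delta) = \set{\sigma(\phi) | \phi \in \Delta}$, the left-hand side is the intersection of the family $\set{H(\sigma(\phi)) | \phi \in \Delta}$, so it coincides with $\bigcap_{\phi \in \Delta}(H \circ \sigma)(\phi)$.

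Next I would note that, setting $H' = H \circ \sigma$, the map $H' : \Fm \to \P B$ is a homomorphism, being a composite of two homomorphisms. Applying the hypothesis $\Delta \vDash \psi$ to this particular $H'$ (over the same $B$) yields $\bigcap_{\phi \in \Delta} H'(\phi) \subseteq H'(\psi)$, that is, $\bigcap_{\phi \in \Delta} H(\sigma(\phi)) \subseteq H(\sigma(\psi))$. Chaining this with the identity from the previous step gives the desired inclusion, and as $B$ and $H$ were arbitrary we conclude $\sigma(\Delta) \vDash \sigma(\psi)$.

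I do not expect a genuine obstacle: the whole content is that homomorphisms compose and that $\vDash$ ranges over all of them, which is precisely the algebraic design that the preceding discussion advertises as guaranteeing substitutionality. The single point that deserves a moment of care is purely bookkeeping — if $\sigma$ fails to be injective on $\Delta$ then $\sigma(\Delta)$ carries fewer indices than $\Delta$, but this is harmless, since deleting repeated members of a family leaves its intersection unchanged, so the two intersections above really are equal.
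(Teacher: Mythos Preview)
Your proposal is correct and is exactly the approach the paper takes: the paper's proof simply notes that it follows directly from the definitions together with the fact that if $H:\Fm \to \P B$ is a homomorphism then so is $H\circ\sigma$. Your write-up merely unwinds this one-line argument in more detail, including the harmless bookkeeping about possible non-injectivity of $\sigma$ on $\Delta$.
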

\begin{proof}
    This follows directly from the definitions, and the fact that if $H:\Fm \to \P B$
    is a homomorphism then so is $H\circ \sigma$.
\end{proof}

We will also use the following abbreviation for the \emph{internal top}.
\begin{defin}\label{def:internal-top}
The \emph{internal top} is the formula $\itop := \ilnot\ibot$.
\end{defin}

Thus, for every Boolean algebra $B$ and every homomorphism
$H:\Fm \to \P B$, $H(\itop)=\{1\}$.

\subsection{Natural deduction} \label{deduction-system}

To define a deductive system for LTP, we introduce \emph{labels} and
\emph{labelled formulas}. Labels are syntactic entities, just like formulas;
indeed, they are classical propositional formulas:

\begin{defin}
The term algebra of labels is denoted by $\Lb$ and defined by the following grammar:
$$\alpha ::= \bbot \mathrel| p_i \mathrel| \bnot \alpha \mathrel| \alpha \bor \alpha \mathrel|
\alpha \band \alpha.$$
The variables $p_i$ are called \emph{atomic labels}.
\end{defin}

The role of labels is to provide a syntactic representation of elements of the
underlying Boolean algebra. Although labels are purely syntactic objects,
they can be interpreted via homomorphisms as elements of a Boolean algebra,
while LTP formulas are interpreted as subsets of the same algebra. 

\begin{defin}
A \emph{labelled formula} is a pair of a label and a formula. We write a
labelled formula as $\alpha\of \phi$ where $\alpha$ is the label and $\phi$ the
formula. 
\end{defin}

The intended meaning of $\alpha\of\phi$ is that the element denoted by
$\alpha$ belongs to the set denoted by $\phi$. More precisely, given
homomorphisms $h:\Lb\to B$ and $H:\Fm\to \P B$, the labelled formula
$\alpha\of\phi$ is satisfied if $h(\alpha)\in H(\phi)$. In this way, labels
allow us to internalise the membership relation $h(\alpha)\in H(\phi)$ within
the proof system.

The labelled natural deduction system is motivated by this semantic picture:
its rules are designed to mirror directly the membership conditions defining
the internal connectives, whose semantic clauses involve decomposing an
element of the underlying Boolean algebra as a join or meet of two elements.
The corresponding labelled elimination rules make these decompositions
explicit in the proof system.

A label-free deductive system may well also be possible, and it would be of
independent interest to investigate whether a complete deductive system for
LTP can be formulated without labels. In particular, a Hilbert-style
axiomatisation of LTP might be obtainable. Such a system, however, would
serve a different purpose from the one pursued here: our aim is to give a
proof system whose rules track the algebraic semantics as closely as
possible, and which can plausibly be adapted to other team logics with
similar power-algebra semantics, including logics based on different initial
logics or equipped with different connectives.

We define entailment in the natural way:

\begin{defin}
    Let $\Gamma$ be a set of labelled formulas, then $ \Gamma \vDash \beta \of \psi$
    iff for all Boolean algebras $B$ and all homomorphisms $h: \Lb \to B$,
    $H:\Fm \to \P B$ if $h(\alpha) \in H(\phi)$ for all $\alpha\of \phi \in
    \Gamma$ then $h(\beta) \in H(\psi)$.
\end{defin}

We say that homomorphisms $H$ and $h$ make $\alpha \of \phi$ true if $h(\alpha) \in
H(\phi)$.

\begin{prop}
    $\Delta \vDash \psi$ iff $p\of \Delta \vDash p\of \psi$, where $p$ is an atomic
    label and $p\of \Delta = \set{p\of \phi | \phi \in \Delta}$.
\end{prop}

\begin{proof} 
The left-to-right implication is immediate from the definitions. Assume
$p\of \Delta \vDash p\of \psi$, and let $b \in B$. We let $h: \Lb \to B$ be
such that $h(p)=b$, then by the assumption we know that if $b \in H
(\delta)$ for all $\delta \in \Delta$ then $b \in H(\psi)$. Thus, $\bigcap_
{\delta \in\Delta} H(\delta) \subseteq H(\psi)$.
\end{proof}

We now present a sound and complete proof system for the relation
$\Gamma \vDash \beta \of \psi$, where $\Gamma$ is a set of labelled formulas. We
use the standard abbreviations $\alpha \bleftrightarrow \beta $ for 
$(\alpha \bor \bnot \beta )\band (\bnot \alpha  \bor \beta )$ and $\btop$ for $\bnot \bbot$.
Furthermore, we write $\alpha =\beta $ for the labelled formula $\alpha \bleftrightarrow
\beta \of \itop$. Note that whenever homomorphisms $H$ and $h$ make $\alpha =\beta $ true, it
follows that $h(\alpha )=h(\beta)$. Since labels are interpreted in Boolean algebras, we can use
classical propositional equivalences in calculations pertaining to labels.

\subsubsection{Rules for labels}

We first introduce the two rules governing labels:

$$\infer[\taut]{\beta \of  \itop}{\alpha_1 \of  \itop & \cdots & \alpha_k\of  \itop} \ruleskip
\infer[\sub]{\beta \of \phi}{\alpha = \beta  & \alpha \of \phi}$$

The rule $\taut$ is applicable only if $\alpha_1, \ldots, \alpha_k \vdash \beta$ in
classical propositional logic, i.e., if $\bnot (\alpha_1 \band \ldots
\band \alpha_k) \bor \beta$
is a tautology. Note that we allow for the special
case when $k=0$ in this rule.


\subsubsection{Rules for external Boolean connectives}

For the external Boolean connectives we use the usual natural deduction rules
for classical propositional logic, formulated for labelled formulas:

$$\infer[\ai]{\alpha \of \phi\eand \psi}{\alpha \of \phi & \alpha \of  \psi} \ruleskip
    \infer[\ae]{\alpha \of \phi}{\alpha \of \phi \eand \psi}\ruleskip
    \infer[\ae]{\alpha \of \psi}{\alpha \of \phi \eand \psi}$$

$$\infer[\oi]{\alpha \of \phi\eor \psi}{\alpha \of \phi} \ruleskip
    \infer[\oi]{\alpha \of \phi\eor \psi}{\alpha \of \psi} \ruleskip
    \infer[\oe]{\beta \of \sigma}{\alpha \of \phi \eor \psi & \infer*{\beta \of \sigma}{[\alpha\of \phi]} & \infer*{\beta \of \sigma}{[\alpha \of \psi]}}
$$

$$\infer[\ni]{\alpha \of \enot \phi}{\infer*{\beta \of \ebot}{[\alpha \of \phi]}} \ruleskip
    \infer[\ne]{\beta \of  \ebot}{\alpha \of \phi & \alpha \of \enot\phi} $$


$$\infer[\raa]{\alpha \of \phi}{\infer*{\beta \of \ebot}{[\alpha \of \enot\phi]}} \ruleskip \infer[\be]{\beta \of \phi}{\alpha \of \ebot} $$


\subsubsection{Rules for internal Boolean connectives}

Next we add rules for the internal connectives. The rule $\iae$ is modelled on
the fact that $b \in H(\phi \iland \psi)$ iff there are $c,d$ such that $c
\in H(\phi)$, $d \in H(\psi)$ and $b=c \band d$ holds in the Boolean algebra $B$. Similarly for
$\ioe$. 

$$\infer[\iai]{\alpha \band \beta \of  \phi \iland \psi}{\alpha \of \phi & \beta \of  \psi}
    \ruleskip
    \infer[\iae]{\beta \of \sigma}{\alpha \of \phi \iland \psi & \infer*{\beta \of \sigma}{\deduce{[\alpha =p\band
q]}{\deduce{[q\of \psi]}{[p\of \phi]}}}}$$ 

$$\infer[\ioi]{\alpha  \bor \beta \of  \phi \ilor \psi}{\alpha \of \phi & \beta \of  \psi}
    \ruleskip
    \infer[\ioe]{\beta \of \sigma}{\alpha \of \phi \ilor \psi & \infer*{\beta \of \sigma}{\deduce{[\alpha =p\bor q]}{\deduce{[q\of \psi]}{[p\of \phi]}}}}$$ 

$$\infer[\ini]{\bnot \alpha \of \ilnot \phi}{\alpha \of \phi} \ruleskip
    \infer[\ine]{\bnot \alpha \of \phi}{\alpha \of \ilnot \phi}$$

In $\iae$ and $\ioe$ the $p$ and $q$ are distinct atomic labels that
do not occur in any uncancelled assumptions, nor in $\alpha $ or $\beta $. These
elimination rules are the exact inverses of their corresponding introduction
rules: Anything that follows from the premises of an introduction rule can
also be derived from the conclusion of that introduction rule. Note also
that vacuous discharges are allowed in the elimination rules for $\ilor$ and
$\iland$.

Without the distinctness criteria in $\iae$ and $\ioe$, the system would not
be sound. Indeed, let $p$ and $q$ be atomic labels, then
$$ p=q\band q, q\of P, q\of Q \vdash p\of P\eand Q$$ 
is semantically sound, and witnessed by the derivation
$$
	\infer[\sub]{p\of P\eand Q}{
		\infer[\taut]{p= q}{p = q \band q } 
		&
		\infer[\ai]{q\of P\eand Q}{q\of P& q\of Q}}  
$$
Thus, without the distinctiveness condition in $\iae$, we could instantiate the
elimination rule with the same label $q$ twice and derive $p\of P\eand Q$ from
$p\of P \iland Q$. However, $$p\of P \iland Q \nvDash p\of P\eand Q$$ as shown by the following
example: Let $B=\two=\{\bbot,\btop\}$, the two-element Boolean algebra. If
$H:\Fm\to \P \two$ is a homomorphism, $H(P)=\{\btop\}$, and  $H
(Q)=\{\bbot\}$, then $H(P \eand Q)=\emptyset$. But $H(P \iland Q)=
\{\btop \band \bbot\}=\{\bbot\}$. Thus, for a label homomorphism
$h:\Lb \to \two$ such that $h(p)=\bbot$, clearly $h(p) \in H(P\iland Q)$,
but $h(p) \notin H(P\eand Q)$.

For an example of a non-trivial derivation of an entailment, see Figure
\ref{fig:derivation}. It is
 easy to see that the derivability relation satisfies the following lemma.

\begin{lemma}\label{lem:basicfacts} 
Let $\Gamma$ be a set of labelled formulas, then $\Gamma \vdash \alpha \of \phi$ iff
$\Gamma,\alpha \of\enot \phi \vdash \alpha \of \ebot$.
\end{lemma}

We now verify that the rules are sound with respect to the labelled semantics.

\begin{thm}[Soundness]
Let $\Gamma$ be a set of labelled formulas and assume $\Gamma \vdash
\alpha \of \phi$, then $\Gamma \vDash \alpha \of \phi$.
\end{thm}

\begin{proof} 
This follows from a straightforward induction on the construction of proof trees.
We will illustrate the proof with the case when the last rule of the proof
tree proving $\Gamma\vdash \beta \of \sigma$ is $\iae$: 
$$ \infer[\iae]{\beta\of \sigma}{\alpha \of \phi \iland \psi & \infer*
{\beta \of \sigma}{\deduce{[\alpha =p\band q]}{\deduce{[q\of \psi]}{
[p\of \phi]}}}}$$ 
The induction hypothesis gives us that $\Gamma \vDash \alpha \of \phi \iland \psi $
and $\Gamma, p\of \phi, q\of \psi ,\alpha =p\band q\vDash \beta \of \sigma$. If any
homomorphisms $H$ and $h$ make all labelled formulas in $\Gamma$ true then,
by the assumption, $h(\alpha) \in H(\phi \iland \psi)$. This means that there are
$c \in H(\phi)$ and $d \in H(\psi)$ such that $c\band d=h(\alpha)$. Let $h'$ be
the unique homomorphism that is like $h$ except that $h'(p)=c$ and $h'
(q)=d$. Then $h'$ and $H$ make $\Gamma$ true since $p$ and $q$ do not occur
in $\Gamma$. They also make $q\of \psi$ and $p\of \phi $ true. Also, $h'
(\alpha  \bleftrightarrow p\band q)= \btop$ and so, they also make $\alpha =p\band q$
true. By the induction hypothesis this means that $h'$ and $H$ make
$\beta \of \sigma$ true. Now, since $p$ and $q$ do not occur in $\beta$ we can safely
conclude that $h$ and $H$ also make $\beta\of \sigma$ true.
\end{proof}

\section{Completeness and adequacy}

Next we will prove the completeness of the proof system, but first we need some 
definitions and a few easy facts.

\begin{defin}
We say that $\Gamma$ is \emph{consistent} if $\Gamma \nvdash \bbot\of \ebot$.
\end{defin}

Observe that if $\Gamma \vdash \bbot\of \ebot$ then $\Gamma \vdash \alpha \of \phi$
for all labels $\alpha $ and all formulas $\phi$. Thus, $\Gamma$ is consistent iff
there is labelled formula $\alpha \of\phi$ such that $\Gamma \nvdash \alpha \of\phi$.

\begin{prop}\label{prop:max}
    If $\Gamma$ is consistent then so is either $\Gamma,\alpha \of \phi$ or
    $\Gamma, \alpha \of \enot \phi$.
\end{prop}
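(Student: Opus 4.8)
The plan is to prove the contrapositive: supposing that \emph{both} $\Gamma, a\of \phi$ and $\Gamma, a\of \lnot\phi$ are inconsistent, I will show that $\Gamma$ itself is inconsistent, i.e.\ that $\Gamma \vdash \bot\of \bot$. First I would record the characterisation of inconsistency noted just before the proposition: a set $\Sigma$ is inconsistent exactly when $\Sigma \vdash \bot\of \bot$, and by the rule $\be$ this is equivalent to $\Sigma \vdash c\of \psi$ for every label $c$ and formula $\psi$. In particular, from the two assumed inconsistencies I obtain derivations witnessing $\Gamma, a\of \phi \vdash a\of \bot$ and $\Gamma, a\of \lnot\phi \vdash a\of \bot$.

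Next I would discharge the extra hypothesis in each derivation. Applying $\ni$ to the first derivation, discharging the assumption $a\of \phi$ (with the $\bot$-labelled premise of the rule taken to be $a\of \bot$, i.e.\ $b=a$), yields $\Gamma \vdash a\of \lnot\phi$. Symmetrically, applying $\raa$ to the second derivation, discharging $a\of \lnot\phi$, yields $\Gamma \vdash a\of \phi$. The second of these is precisely the right-to-left direction of Lemma~\ref{lem:basicfacts}, and the first is its direct analogue obtained from $\ni$; one could equally quote the lemma for both after noting that $\phi$ and $\lnot\lnot\phi$ are interderivable.

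Finally, I would combine the two derivations, both rooted in $\Gamma$ alone: placing the derivation of $a\of \phi$ and the derivation of $a\of \lnot\phi$ as the two premises of a single application of $\ne$ gives $\Gamma \vdash a\of \bot$, and one further application of $\be$ gives $\Gamma \vdash \bot\of \bot$, contradicting the consistency of $\Gamma$. I do not expect a serious obstacle here: the argument is purely proof-theoretic tree manipulation, invoking neither composition of derivations nor the semantics. The only points demanding care are the passage between ``$\Sigma$ is inconsistent'' and ``$\Sigma \vdash a\of \bot$'', and the bookkeeping of discharged assumptions in the applications of $\ni$ and $\raa$, so that the two intermediate conclusions genuinely depend on $\Gamma$ only.
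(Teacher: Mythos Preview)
Your argument is correct and is essentially the same as the paper's, which records only ``Directly by using the $\ni$ and $\ne$ rules.'' The sole cosmetic difference is that you use $\raa$ on the second hypothesis to obtain $a\of\phi$, whereas the paper's hint suggests applying $\ni$ a second time to obtain $a\of\lnot\lnot\phi$ and then firing $\ne$ on the pair $a\of\lnot\phi$, $a\of\lnot\lnot\phi$; both routes close immediately.
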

\begin{proof}
        This follows directly from the $\ni$ and $\ne$ rules.
\end{proof}

For the next proposition remember that if $\alpha $ and $\beta $ are labels then $\alpha =\beta $
denotes the labelled formula $\alpha  \bleftrightarrow \beta : \itop$.

\begin{prop}\label{prop:resp}
If $\Gamma,\alpha \of \phi\ilor \psi$ is consistent and $p$ and $q$ are atomic
labels not occurring in $\Gamma$, nor in $\alpha $, then $\Gamma,
\alpha \of \phi\ilor \psi,p\of \phi,q\of \psi,\alpha  = p \bor q$ is consistent. And
similarly for $\iland$.
\end{prop}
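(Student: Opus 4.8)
The plan is to prove the contrapositive: assuming the witnessed extension is inconsistent, I derive inconsistency of $\Gamma, a\of \phi\ilor \psi$ alone by a single application of the rule $\ioe$ that discharges the three added witness assumptions. The entire purpose of the freshness hypotheses on $p$ and $q$ is precisely to license that application.

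First I would suppose, toward a contradiction, that
$$\Gamma, a\of \phi\ilor \psi, p\of \phi, q\of \psi, a = p \lor q \vdash \bot\of \bot,$$
and fix a derivation $D$ of $\bot\of\bot$ from these labelled formulas. In $D$ the three added formulas $p\of\phi$, $q\of\psi$, and $a = p\lor q$ occur as open leaves (possibly several, possibly none), while every other open leaf comes from $\Gamma, a\of\phi\ilor\psi$. This is exactly the configuration expected in the minor-premise slot of $\ioe$.

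The key step is then to form the $\ioe$ inference whose major premise is $a\of \phi\ilor \psi$ --- available in the ambient context --- and whose minor premise is $D$, discharging all occurrences of $[p\of\phi]$, $[q\of\psi]$, and $[a = p\lor q]$ (vacuous discharges being permitted by the stated convention). The resulting derivation establishes $\bot\of\bot$ with open assumptions drawn only from $\Gamma, a\of\phi\ilor\psi$, so $\Gamma, a\of\phi\ilor\psi \vdash \bot\of\bot$, contradicting the assumed consistency of that set. The $\iland$ case is identical, using $\iae$ in place of $\ioe$ and $a = p\land q$ in place of $a = p\lor q$.

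The main obstacle is to verify the eigenvariable side conditions attached to $\ioe$, namely that $p$ and $q$ are distinct and occur in neither the undischarged assumptions nor the label of the conclusion. The undischarged assumptions after the inference are those of $\Gamma, a\of\phi\ilor\psi$: by hypothesis $p$ and $q$ do not occur in $\Gamma$ or in $a$, and they cannot occur in $\phi\ilor\psi$ because $p,q$ are atomic \emph{labels} whereas $\phi\ilor\psi$ is a \emph{formula}, the two being built from disjoint syntactic stocks (the atomic labels $p_i$ versus the propositional variables $P_i$). The conclusion label $\bot$ contains no atomic label. Hence all side conditions are satisfied and the application of $\ioe$ is legitimate, completing the argument.
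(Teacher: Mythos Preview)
Your proof is correct and is exactly the approach the paper intends: the paper's own proof reads in its entirety ``Directly by using the $\ioe$ rule and the $\iae$ rule,'' and you have spelled out precisely that argument, including the verification of the eigenvariable side condition that makes the application of $\ioe$ legitimate.
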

\begin{proof} 
This follows directly from the $\ioe$ rule, and the case for $\iland$ follows
in the same way from the $\iae$ rule.
\end{proof}

\begin{defin} 
$\Gamma$ is \emph{$\ilor$-saturated} if whenever $\alpha \of \phi\ilor
\psi \in \Gamma$ then there are labels $\beta_1$ and $\beta_2$ such that $\set{\beta_1\of
\phi,\beta_2\of \psi,\alpha  = \beta_1 \bor \beta_2} \subseteq
\Gamma$. The dual notion of \emph{$\iland$-saturation} is defined similarly.
 We say that $\Gamma$ is \emph{saturated} if it is both $\ilor$-saturated and
 $\iland$-saturated.
\end{defin}


\begin{lemma}
Let $\Gamma$ be a set of labelled formulas and $\Gamma'$ be the result of
renaming the atomic labels $p_i$ by $p_{2i}$. Then 
\begin{enumerate}
    \item $\Gamma$ is consistent iff $\Gamma'$ is, and
    \item $\Gamma$ is satisfiable iff $\Gamma'$ is.
\end{enumerate}
\end{lemma}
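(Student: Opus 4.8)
The plan is to treat the renaming $\rho\colon p_i\mapsto p_{2i}$ as an injective homomorphism of the label algebra $\Lb$ and to push it, together with a suitable inverse, through derivations and through semantic witnesses. Note first that $\rho$ is injective on atomic labels, sends atomic labels to atomic labels, and fixes $\bot$; moreover it acts only on the label part of a labelled formula, since the formula $\phi$ in $a\of\phi$ is built from the variables $P_i$, which are disjoint from the atomic labels $p_i$. Thus $\Gamma'=\{\,\rho(a)\of\phi \mid a\of\phi\in\Gamma\,\}$ and $\rho(\bot)=\bot$. The key syntactic tool will be a renaming lemma: if $\sigma$ is any injective renaming of atomic labels by atomic labels, then $\Gamma\vdash a\of\phi$ implies $\sigma(\Gamma)\vdash\sigma(a)\of\phi$. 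I would prove this by induction on the derivation. All rules for the external and internal connectives are schematic in their labels and commute with $\sigma$ directly; the only delicate points are $\taut$ and the eigenvariable conditions of $\iae$ and $\ioe$. For $\taut$ one uses that classical propositional derivability is closed under substitution, so $a_1,\dots,a_k\vdash b$ classically gives $\sigma(a_1),\dots,\sigma(a_k)\vdash\sigma(b)$ classically. For $\iae$ and $\ioe$ the eigenvariables $p,q$ become $\sigma(p),\sigma(q)$, which are again variables, remain distinct since $\sigma$ is injective, and still fail to occur in the renamed open assumptions and side labels, because for a variable-to-variable injection $\sigma(p)$ occurs in $\sigma(c)$ iff $p$ occurs in $c$; renaming the eigenvariables of the derivation apart beforehand preserves the global freshness convention.

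For part (1) I would argue both directions via this lemma. The direction ``$\Gamma$ inconsistent $\Rightarrow\Gamma'$ inconsistent'' is immediate: from $\Gamma\vdash\bot\of\bot$ the renaming lemma applied to $\rho$ yields $\rho(\Gamma)=\Gamma'\vdash\rho(\bot)\of\bot=\bot\of\bot$. For the converse, suppose $\Gamma'\vdash\bot\of\bot$. A witnessing derivation is finite, so it uses a finite subset $\Gamma_0'\subseteq\Gamma'$ and only finitely many atomic labels; write $\Gamma_0'=\rho(\Gamma_0)$ with $\Gamma_0\subseteq\Gamma$ finite. The labels of $\Gamma_0'$ are even, and $p_{2i}\mapsto p_i$ is injective on them; since only finitely many labels are involved I can extend it to an injective variable renaming $\tau$ that sends the finitely many eigenvariables to fresh labels. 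Then $\tau\circ\rho$ is the identity on the atomic labels of $\Gamma_0$, so $\tau(\Gamma_0')=\Gamma_0$, and the renaming lemma gives $\Gamma_0\vdash\bot\of\bot$, hence $\Gamma\vdash\bot\of\bot$.

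Part (2) I would handle purely semantically, with no appeal to the proof system. If $B,h,H$ witness satisfiability of $\Gamma$, define $h'\colon\Lb\to B$ by $h'(p_{2i})=h(p_i)$ and arbitrarily on odd labels; injectivity of $i\mapsto 2i$ makes this well defined, and $h'\circ\rho=h$, so $B,h',H$ make every $\rho(a)\of\phi\in\Gamma'$ true. Conversely, if $B,h',H'$ witness satisfiability of $\Gamma'$, set $h=h'\circ\rho\colon\Lb\to B$; then $h(a)=h'(\rho(a))\in H'(\phi)$ for every $a\of\phi\in\Gamma$, so $B,h,H'$ satisfy $\Gamma$. Note one cannot obtain (1) as a corollary of (2), since that would require completeness, which this lemma is meant to support; hence (1) genuinely needs the syntactic argument.

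The hard part will be the renaming lemma, and within it the eigenvariable bookkeeping for $\iae$ and $\ioe$ together with the backward half of (1): one cannot invert $\rho$ by a single global renaming, because its image omits the odd labels, so one must pass to a finite subderivation and build a finite-support injective inverse that avoids clashing with the eigenvariables. Everything else reduces to the observation that $\rho$ is an injective label homomorphism fixing $\bot$ and leaving formulas untouched.
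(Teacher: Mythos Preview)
Your proposal is correct and follows essentially the same approach as the paper. The paper is terser: for part~(1) it simply says that a derivation of $\bot\of\bot$ from $\Gamma$ can be renamed wholesale, and for the converse it passes to a finite $\Gamma^f\subseteq\Gamma'$ and applies the explicit renaming $p_{2i}\mapsto p_i$, $p_{2i+1}\mapsto p_{i+k}$ with $k$ chosen large; for part~(2) it gives exactly your semantic argument. Your formulation via an explicit injective-renaming lemma, with the eigenvariable bookkeeping for $\iae$/$\ioe$ spelled out, is a cleaner and more careful version of the same idea, and your observation that the backward direction of~(1) needs a finite-support construction because $\rho$ is not surjective is precisely the point behind the paper's choice of $k$.
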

\begin{proof}(1) If $\Gamma \vdash \bbot \of \ebot$ then the same derivation with all
 atomic labels renamed shows that $\Gamma' \vdash \bbot \of \ebot$. For the other
 direction we need to be a little bit more careful as a proof of $\bbot \of \ebot$ may
 mention more atoms than those in $\Gamma'$. Let $\mathscr D$ be a derivation
 of $\bbot \of \ebot$ from $\Gamma'$, and replace all atoms $p_{2i}$ by $p_i$ and all
 atoms $p_{2i+1}$ by $p_{i+k}$ where $k$ is large enough, i.e., larger than
 all indices of atoms occurring in $\mathscr D$. Then the resulting
 derivation shows that $\Gamma \vdash \bbot \of \ebot$.

(2) Given a Boolean algebra $B$ and homomorphisms $H$ and $h$ such that $h
(\alpha) \in H(\varphi)$ for all $\alpha:\varphi \in \Gamma$ we can define $h'$ by
setting $h'(p_{2i}) = h(p_i)$ and $h'(p_{2i+1}) = \bbot \in B$. Then $B,H,h'$
satisfies $\Gamma'$. Also, if $B,H,h$ satisfies $\Gamma'$ then clearly for
$h'$ defined by $h'(p_{i})=h(p_{2i})$ we have that  $B,H,h'$ satisfies
$\Gamma$.
\end{proof}

\begin{thm}[Completeness]
    If $\Gamma \vDash \alpha \of \phi$ then $\Gamma \vdash \alpha \of \phi$.
\end{thm}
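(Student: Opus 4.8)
The plan is to prove completeness by contraposition: assume $\Gamma \nvdash a\of \phi$ and construct a Boolean algebra $B$ together with homomorphisms $h \of \Lb \to B$ and $H \of \Fm \to \P B$ that satisfy every labelled formula in $\Gamma$ but make $a\of \phi$ false, witnessing $\Gamma \nvDash a\of \phi$. By Lemma~\ref{lem:basicfacts}, $\Gamma \nvdash a\of \phi$ is equivalent to $\Gamma, a\of \lnot\phi \nvdash \bot\of\bot$, so the starting point is a consistent set $\Gamma_0 = \Gamma \cup \{a\of\lnot\phi\}$; it suffices to build a model of $\Gamma_0$, since in that model $h(a) \in H(\lnot\phi)$, i.e.\ $h(a) \notin H(\phi)$.

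\medskip\noindent\textbf{Saturation and maximal consistency.}
First I would extend $\Gamma_0$ to a maximal consistent, saturated set $\Gamma^\ast$. The renaming lemma guarantees a supply of fresh atomic labels, so I can interleave two processes in an $\omega$-length enumeration of all labelled formulas and all $\ilor$/$\iland$-statements: at each stage, use Proposition~\ref{prop:max} to decide $b\of\sigma$ or $b\of\lnot\sigma$ consistently (maximality), and whenever an $\ilor$- or $\iland$-formula enters, use Proposition~\ref{prop:resp} with fresh propositional variables to add the required witnesses (saturation). Taking the union over all stages yields $\Gamma^\ast$ that is consistent (consistency being finitary), maximal consistent, and both $\ilor$- and $\iland$-saturated. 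The renaming lemma is exactly what makes the fresh-variable requirement in $\ioe$ and $\iae$ compatible with enumerating all formulas.

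\medskip\noindent\textbf{The canonical model.}
The Boolean algebra $B$ will be the Lindenbaum--Tarski algebra of the labels relative to $\Gamma^\ast$: define $a \sim b$ iff $(a=b) \in \Gamma^\ast$ (equivalently $a\leftrightarrow b\of\itop \in \Gamma^\ast$), let $B = \Lb/{\sim}$, and let $h$ send a label to its equivalence class. The \taut\ rule, together with soundness' observation that $\itop$-membership forces the top element, ensures $\sim$ is a congruence for the Boolean operations and that the classical propositional axioms hold, so $B$ is a genuine Boolean algebra and $h$ a homomorphism. For the formula homomorphism I would set
$$H(\phi) = \set{\,h(b) \mid b\of\phi \in \Gamma^\ast\,} \subseteq B.$$
The substantive work is the \emph{truth lemma}: for every formula $\phi$ and every label $b$, one has $h(b) \in H(\phi)$ iff $b\of\phi \in \Gamma^\ast$. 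This is proved by induction on $\phi$. The external connectives $\bot,\lnot,\lor,\land$ use maximal consistency and the standard natural deduction rules exactly as in the classical Lindenbaum--Tarski argument, and the $\sub$ rule guarantees $H$ is well defined on $\sim$-classes. The internal cases are where the saturation conditions are consumed: for $\phi\iland\psi$ and $\phi\ilor\psi$ the \emph{backward} direction follows from $\iai$/$\ioi$, while the \emph{forward} direction is precisely saturation, which supplies witnesses $b_0\of\phi$, $c_0\of\psi$ with $a = b_0\circ c_0$ in $\Gamma^\ast$, so that $h(b) = h(b_0)\circ h(c_0)$ lands in $H(\phi)\circ H(\psi)$ by the induction hypothesis, matching the pointwise definition of $\iland$/$\ilor$ in $\P B$. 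The $\ilnot$ case follows from $\ini$/$\ine$ together with maximality.

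\medskip\noindent\textbf{Conclusion and main obstacle.}
Once the truth lemma is established, every $c\of\sigma \in \Gamma \subseteq \Gamma^\ast$ gives $h(c) \in H(\sigma)$, so $B,h,H$ satisfy $\Gamma$; and since $a\of\lnot\phi \in \Gamma^\ast$ forces $h(a)\notin H(\phi)$, this model refutes $a\of\phi$, completing the contrapositive. I expect the main obstacle to be the forward direction of the internal-connective cases of the truth lemma, where the definitions must be arranged so that the \emph{existential} truth condition ($x \in H(\phi\iland\psi)$ iff $x = y\land z$ for some $y\in H(\phi)$, $z\in H(\psi)$) is met \emph{exactly} by the witnesses handed over by saturation; making this exact rather than merely sound is what justifies the particular form of the $\iae$ and $\ioe$ rules, including the distinctness of the fresh variables $p,q$.
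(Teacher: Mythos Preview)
Your proposal is correct and follows essentially the same route as the paper: extend $\Gamma\cup\{a\of\lnot\phi\}$ to a maximal consistent saturated set via the renaming lemma and Propositions~\ref{prop:max}--\ref{prop:resp}, form the Lindenbaum--Tarski algebra of labels, and prove the truth lemma by induction, with saturation supplying the witnesses in the $\ilor$/$\iland$ cases. The only cosmetic difference is that the paper defines $H$ explicitly only on \emph{atomic} formulas and extends it homomorphically, so that the induction is genuinely a truth lemma rather than a verification that your globally defined $H$ is a homomorphism; the inductive steps you describe are identical to the paper's.
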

\begin{proof}
    Assume that $\Gamma \nvdash \alpha \of \phi$. By Lemma \ref{lem:basicfacts} the
    set $\Gamma_0 =\Gamma,\alpha \of \enot \phi$ is consistent. We will extend it to
    a maximal consistent saturated set $\Gamma^\ast$. By applying the previous
    lemma we may assume that there are infinitely many atomic labels that
    are not mentioned in $\Gamma_0$.

    We construct $\Gamma^\ast$ as the union of $\Gamma_n$ where each
    $\Gamma_n$ is a finite extension of $\Gamma_0$. First, we enumerate all
    labelled formulas and when constructing $\Gamma_{n+1}$ we pick the $n$th
    labelled formula $\beta  \of \psi$ and add either $\beta \of\psi$ or
    $\beta \of\enot\psi$. By Proposition \ref{prop:max} one of these is consistent
    with $\Gamma_n$.

    By Proposition \ref{prop:resp} we can assure that if we add a labelled
    formula $\beta \of \sigma \ilor \theta$ then we also add $p\of \sigma$, $q\of
    \theta$ and $\beta = p \bor q$ for some new atomic labels $p$ and
    $q$, and keeping $\Gamma_{n+1}$ consistent. 
    This construction assures that $\Gamma^\ast$ is maximal consistent and
    saturated.

    Now, let $T=\set{\alpha | \alpha \of  \itop \in \Gamma^\ast}$. Let $B$ be the
    Lindenbaum--Tarski Boolean algebra of labels over $T$, i.e., its elements
    are equivalence classes of labels under the relation of $T$-provable
    equivalence: 
    \[
        B = \Lb / \mathord\sim_T = \set{[\alpha]_T | \alpha \in \Lb},
    \]
    where $\alpha  \sim_T b $ iff $T \vdash \alpha \leftrightarrow \beta $ (in classical
    propositional logic) and $[\alpha]_T=\set{\beta \in \Lb | \alpha \sim_T \beta}$. Observe
    that $B$ is the trivial one-element Boolean algebra iff $T$ is the
    inconsistent theory, i.e., iff $\bbot \in T$.
    Define the homomorphisms $h: \Lb \to B$  and $H: \Fm \to \P B$ by
    \begin{gather*}
       h(p_i)=[p_i]_T \in B \text{ for atomic labels $p_i$, and}\\ H(P_i) = \set{h(\alpha) |
        \alpha\of P_i \in\Gamma^\ast} \text{ for atomic formulas $P_i$}.
    \end{gather*}

    {\sc Claim.} $h(\alpha) \in H(\varphi)$ iff $\alpha \of \varphi \in \Gamma^\ast$.

    The claim is proved by induction on formulas. The base case follows
    immediately from the definition of $H$ and the observation that 
    $$H(\ibot) =
    \{[\bbot]_T\}. $$  This is seen by taking $h(\beta) \in H(\ibot)$ and observing
    that $\beta \of \ibot
    \in \Gamma^\ast$ and so $\bnot \beta \of \itop
    \in \Gamma^\ast$ and, thus, $\bnot \beta  \in T$ and $\beta  \in [\bbot]_T$.
    \begin{itemize}
        \item 
    If $\varphi$ is $\psi \eor \sigma$, $\psi \eand \sigma$ or $\enot \psi$
    the induction step is straightforward, as for example $\alpha  \of  \psi \eand
    \sigma \in \Gamma^\ast$ iff $\alpha \of \psi \in \Gamma^\ast$ and $\alpha \of \sigma
    \in \Gamma^\ast$ and, by the induction hypothesis, this is equivalent to
    $h(\alpha ) \in H(\psi)$ and $h(\alpha ) \in H(\sigma)$, i.e., $h(\alpha ) \in H(\psi) \cap
    H(\sigma) = H(\psi\eand \sigma)$.

\item 
    For the case when $\varphi$ is $\ilnot \psi$ note that $\alpha \of  \ilnot \psi
    \in \Gamma^\ast$ iff $\bnot \alpha  \of \psi \in \Gamma^\ast$ iff $h(\bnot \alpha )
    \in H(\psi)$ iff $\bnot h(\alpha ) \in H(\psi)$ iff $h(\alpha) \in H(\ilnot \psi)$.

\item 
    When $\varphi$ is $\psi \ilor \sigma$ note that $\alpha \of  \psi \ilor \sigma \in
    \Gamma^\ast$ iff there are labels $\beta_1 $ and $\beta_2$ such that $\beta_1 \of \psi \in
    \Gamma^\ast$, $\beta_2\of \sigma \in \Gamma^\ast$ and $\alpha = \beta_1 \bor
    \beta_2 \in \Gamma^\ast$. By the induction hypothesis this is
    equivalent to $h(\beta_1) \in H(\psi)$, $h(\beta_2) \in H(\sigma)$ and $h(\alpha) = h(\beta_1\bor
    \beta_2) = h(\beta_1) \bor h(\beta_2)$. Thus this is equivalent to $h(\alpha) \in H(\psi \ilor
    \sigma)$.
\item 
    The case when $\varphi$ is $\psi \iland \sigma$ is treated similarly, ending the proof of the claim.
\end{itemize}
    It now follows immediately that for these choices
    of $B$, $H$, and $h$ we have $h(\alpha) \in H(\varphi)$ for all $\alpha \of\varphi \in
    \Gamma_0$ and thus $\Gamma \nvDash \alpha \of\phi$.
\end{proof}

\begin{cor}
    The logic LTP is compact.
\end{cor}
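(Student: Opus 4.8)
The plan is to read off compactness as a routine consequence of the Soundness and Completeness theorems together with the finitary character of derivations. In its entailment formulation, compactness asserts that $\Gamma \vDash a\of\phi$ implies $\Gamma_0 \vDash a\of\phi$ for some finite $\Gamma_0 \subseteq \Gamma$. The converse is just monotonicity of $\vDash$, immediate from the definition, so only this direction requires argument.

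First I would invoke Completeness to cross from the semantic to the syntactic side: assuming $\Gamma \vDash a\of\phi$, we obtain a derivation witnessing $\Gamma \vdash a\of\phi$. The crucial observation is that every derivation in the natural deduction system is a finite tree and hence has only finitely many leaves; consequently only finitely many labelled formulas of $\Gamma$ can occur as uncancelled assumptions in it. Collecting precisely those formulas into a finite set $\Gamma_0 \subseteq \Gamma$, the very same derivation witnesses $\Gamma_0 \vdash a\of\phi$. Applying Soundness to $\Gamma_0$ then yields $\Gamma_0 \vDash a\of\phi$, which completes the argument.

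I expect no genuine obstacle here: the entire weight of the result rests on the already-established equivalence of $\vdash$ and $\vDash$, the new content being merely that a finite tree mentions only finitely many distinct assumptions, which is standard for any finitary calculus. If one prefers the satisfiability formulation---that a set of labelled formulas is satisfiable whenever each of its finite subsets is---it follows by the same route: Soundness together with the model construction in the Completeness proof shows that a set is satisfiable exactly when it is consistent (i.e. does not derive $\bot\of\bot$), and consistency is finitary precisely because any derivation of $\bot\of\bot$ uses only finitely many assumptions. Thus every finite subset being consistent forces the whole set to be consistent, hence satisfiable.
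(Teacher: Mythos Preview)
Your proposal is correct and takes essentially the same approach as the paper, which simply records that compactness follows directly from soundness and completeness. You supply more detail than the paper does (spelling out the finitary character of derivations and the satisfiability reformulation), but the underlying argument is identical; note only that the corollary as stated concerns the unlabelled entailment $\Delta \vDash \psi$, which you recover from your labelled version via the proposition $\Delta \vDash \psi$ iff $p\of\Delta \vDash p\of\psi$.
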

\begin{proof} 
This follows directly from soundness and completeness with respect to a finitary proof
system.
\end{proof}

\subsection{Canonical algebras}\label{sec:canonical}

The semantics of classical propositional logic can be given either in terms of
the two-element Boolean algebra or in terms of all Boolean algebras. In this
sense, the two-element Boolean algebra (and in fact every non-trivial Boolean
algebra) is \emph{canonical} for classical propositional logic. In LTP, the
situation is different, as we will see.

\begin{defin}
Let $\mathcal{X}$ be a class of Boolean algebras, then $\Delta \vDash_\mathcal
{X} \psi$ if for all $B \in \mathcal{X}$ and all homomorphisms $H:\Fm \to \P
B$: $$\bigcap_{\phi \in\Delta} H(\phi) \subseteq H(\psi).$$
\end{defin}

We write $\Delta \vDash_B \psi$ for $\Delta \vDash_{\{B\}} \psi$. This is a
restricted version of the entailment relation, $\vDash$, where the
quantification ranges over a restricted class $\mathcal{X}$ of Boolean algebras, or a single Boolean
algebra $B$, rather than over all Boolean algebras.

\begin{defin}
    A class $\mathcal{X}$ of Boolean algebras is \emph{adequate} for LTP if
    for every $\Delta$ and $\psi$ we have
    \[
    \Delta \vDash \psi \text{ iff } \Delta \vDash_\mathcal{X} \psi.
    \] 
   A single Boolean algebra $B$ is said to be \emph{canonical} for LTP if $\set{B}$
    is adequate for LTP.
\end{defin}

We first ask whether LTP admits a canonical algebra. The answer is negative,
as follows from the following simple observations. 

\begin{lemma}\label{lemma:trivial}
Let $\one$ denote the trivial one-element Boolean algebra and $\two$ the
two-element Boolean algebra.
    \begin{enumerate}
        \item\label{ett:label} $\vDash_B \ibot \text{ iff } B=\one$
        \item $\vDash_B \ibot \eor \ilnot\ibot \text{ iff } B \in \{\one,\two\}$
        \item If a formula $\phi$ contains no propositional variables then 
        $$\nvDash_B \varphi \text{ iff } \vDash_B \enot\ibot \eor (\ibot \iland \enot \phi).$$
    \end{enumerate}
\end{lemma}

\begin{proof}
\begin{enumerate}
    \item     $\vDash_B \ibot$ means that $B=\{0\}$ and thus that $B=\one$.
    \item Let $H$ be any homomorphism, then $H(\ibot \eor \ilnot\ibot)=\{0\} \cup  \{1\}$ which is $B$ iff $B\in \{ \one,\two \}$.
    \item  Since $\phi$ contains no propositional variables, $H(\phi)=H'(\phi)$ for all homomorphisms $H,H': \Fm \to \P B$. 
    Let $H$ be any such homomorphism. 

    If $\vDash_B\phi$ then $H(\enot \phi)=\emptyset$, and $H(\ibot \iland\enot \phi)=\emptyset$. 
    Thus, $H(\enot \ibot \eor (\ibot \iland \enot \phi) )=H(\enot \ibot)\neq B$. 
    On the other hand, if $\nvDash_B\phi$ then  $H(\ibot \iland\enot \phi)=\{0\}$, and 
    thus, $H(\enot \ibot \eor (\ibot \iland \enot \phi) )= (B\setminus \{0\}) \cup \{0\} = B$.\qedhere
\end{enumerate}
\end{proof}
\begin{thm}
No Boolean algebra is canonical for LTP.
\end{thm}
\begin{proof} 
It follows from Lemma \ref{lemma:trivial} that there are formulas $\phi_1$ and $\phi_2$ such
that $\vDash_B \phi_1$ iff $B\neq \one$ and $\vDash_B \phi_2$ iff
$B\neq \two$. This means that for any class of Boolean algebras $\mathcal
{X}$, if $\one \notin \mathcal{X}$ then $\vDash_\mathcal{X} \phi_1$. However,
$\nvDash \phi_1$, and thus for $\mathcal{X}$ to be adequate we must have that
$\one \in \mathcal{X}$. Similarly for $\phi_2$ and $\two$ and thus, any
adequate set $X$ includes both $\one$ and $\two$.
\end{proof} 

The non-adequacy of the class of finite Boolean algebras is established below
in Theorem~\ref{thm:finite_ba}. By contrast, the completeness argument above
already yields an adequate class of Boolean algebras: the countable
Lindenbaum--Tarski algebras.

\begin{thm}
    The class of countable (including finite) Boolean algebras is adequate for LTP.
\end{thm}
\begin{proof} 
The proof of the completeness theorem constructs a Boolean algebra $B$ as a
Lindenbaum--Tarski algebra over a countably infinite set of propositional
variables. More precisely, suppose that $\Delta \nvDash \phi$. Then
$p\of \Delta \nvDash p\of\phi$ and, hence, by soundness $p\of \Delta \nvdash
p\of\phi$. By the proof of the completeness theorem, there is a finite or
countable Boolean algebra $B$, and homomorphisms $h: \Lb \to B$ and
$H:\Fm \to \P B$ such that $h(p) \in H(\psi)$ for all $\psi \in \Delta$ and
$h(p) \notin H(\phi)$. Thus, $$\bigcap_{\psi \in \Delta} H(\psi) \nsubseteq H
(\phi)$$ and therefore $ \Delta \nvDash_{\{B\}} \phi$.
\end{proof}

\section{Definable connectives}\label{sec:definable}

We have defined an algebra based on external and internal Boolean connectives.

Using these connectives, we can define a number of additional constants and
operators. Some are familiar from ordinary propositional team logics and
modal logics, while others are introduced because they will be useful in
Section~\ref{sec:axiomatising}, where we relate LTP to propositional team
logics from the literature. It will be clear from the definitions we give
that there are plenty of other definable connectives; in particular, all
connectives defined for HBML in \cite{Goranko99} are definable in LTP. In
this paper we focus on the connectives that will be important for expressing
propositional team semantics, and leave further exploration for future work.
We name connectives after their interpretation in the intended semantics.

\subsection{External implication}

\begin{defin}
	The \emph{external implication} is the connective defined by
	\begin{equation*}
		\phi \to \psi := \enot \phi \eor \psi.
	\end{equation*}
\end{defin}

An easy argument shows that the deduction theorem holds in LTP.

\begin{thm}[Deduction theorem for LTP]\label{th:deduction}
	For all $\Delta \cup \set{\phi,\psi}\subseteq\Fm$
	$$\Delta, \phi  \vDash \psi \quad \text{iff} \quad \Delta \vDash \phi \to \psi.$$
\end{thm}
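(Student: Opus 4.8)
The plan is to prove both directions of the biconditional by unwinding the definition of entailment and using the semantic characterisation of $\to$. Recall that $\phi \to \psi = \lnot\phi \lor \psi$, so for any Boolean algebra $B$ and homomorphism $H:\Fm \to \P B$ we have $H(\phi \to \psi) = H(\lnot\phi) \lor H(\psi) = H(\phi)^C \cup H(\psi)$, using that $\lnot$ is set-theoretic complement and $\lor$ is union. The key elementary fact driving the whole argument is therefore the set-theoretic equivalence, for any $x \in B$, that $x \in H(\phi)^C \cup H(\psi)$ iff ($x \in H(\phi)$ implies $x \in H(\psi)$).

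For the direction from right to left, suppose $\Delta \vDash \phi \to \psi$. Fix an arbitrary $B$ and $H$ and take any $x \in \bigcap_{\delta \in \Delta} H(\delta) \cap H(\phi)$; I must show $x \in H(\psi)$. From $x \in \bigcap_{\delta} H(\delta)$ and the hypothesis I get $x \in H(\phi \to \psi) = H(\phi)^C \cup H(\psi)$, and since also $x \in H(\phi)$, the disjunction forces $x \in H(\psi)$. As $B$, $H$, and $x$ were arbitrary, $\Delta, \phi \vDash \psi$.

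For the direction from left to right, suppose $\Delta, \phi \vDash \psi$. Fix $B$, $H$, and take any $x \in \bigcap_{\delta \in \Delta} H(\delta)$; I must show $x \in H(\phi \to \psi) = H(\phi)^C \cup H(\psi)$. I split on whether $x \in H(\phi)$. If $x \notin H(\phi)$ then $x \in H(\phi)^C$ and we are done. If $x \in H(\phi)$, then $x \in \bigcap_{\delta \in \Delta} H(\delta) \cap H(\phi) = \bigcap_{\chi \in \Delta \cup \set{\phi}} H(\chi)$, so by the hypothesis $x \in H(\psi)$, and again $x$ lies in the union. Either way $x \in H(\phi \to \psi)$, so $\Delta \vDash \phi \to \psi$.

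Since this is a purely pointwise Boolean argument, there is essentially no hard part: the only things to be careful about are treating the case $\Delta = \emptyset$ correctly (where $\bigcap_{\delta \in \emptyset} H(\delta) = B$, so the restricted intersection is just $H(\phi)$), and being explicit that $H$ preserves $\lnot$ and $\lor$ as complement and union so that the interpretation of $\to$ really is the material conditional at each point. The mild subtlety worth flagging is that the definition of LT-entailment quantifies over \emph{all} Boolean algebras and homomorphisms simultaneously, so both directions must keep $B$ and $H$ fixed-but-arbitrary throughout; no single canonical algebra is used, and the equivalence holds uniformly because the pointwise fact above holds in every $\P B$.
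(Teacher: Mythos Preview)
Your proof is correct and is precisely the ``standard Boolean considerations'' the paper alludes to; the paper does not spell out any further details, so your argument is essentially the same approach, just made explicit.
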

\begin{proof}
	By the definition of external implication we have, for every Boolean algebra
	$B$ and every homomorphism $H:\Fm \to \P B$,
	$$H(\phi \to \psi)=H(\enot\phi \eor \psi)=\bigl(B\setminus H(\phi)\bigr)\cup H(\psi).$$
	Assume first that $\Delta,\phi \vDash \psi$, and let
	$b\in \bigcap_{\delta\in\Delta} H(\delta)$. If $b\notin H(\phi)$, then
	$b\in H(\enot\phi)$, and hence $b\in H(\phi\to\psi)$. If instead
	$b\in H(\phi)$, then by the assumption $b\in H(\psi)$, and again
	$b\in H(\phi\to\psi)$. Thus $\Delta\vDash \phi\to\psi$.

	Conversely, assume that $\Delta\vDash \phi\to\psi$, and let
	$b\in \bigcap_{\delta\in\Delta}H(\delta)\cap H(\phi)$. By the assumption,
	$b\in H(\phi\to\psi)$. Since $b\in H(\phi)$, we have
	$b\notin H(\enot\phi)$, so $b\in H(\psi)$. Hence
	$\Delta,\phi\vDash \psi$.
\end{proof}

\subsection{Two constants}

We define two constants that will be used throughout the rest of the paper;
one is the \emph{external top}, and the other the \emph{not-bottom} constant
mimicking what was introduced in \cite{Yang2017} as the \emph{non-emptiness}, NE,
constant.

\begin{defin}
	The constants $\etop$ and $\NB$ are defined by
	\[
		\etop := \enot\ebot
		\quad\text{and}\quad
		\NB :=\enot \ibot.
	\]
	
\end{defin}

Recall from Definition~\ref{def:internal-top} that $\itop :=\ilnot\ibot$ is
the \emph{internal top}.

\begin{prop}\label{prop:constant_semantics}
	For every Boolean algebra $B$ and every homomorphism $H:\Fm\to\P B$,
	\[
		H(\etop)=B,
		\qquad
		H(\NB)=\set{x\in B \mid x\neq\bbot},\qquad\text{and}
		\qquad
		H(\itop)=\set{\btop}.
	\]
\end{prop}
\begin{proof}
	Since $H(\ebot)=\emptyset$, the first equality follows from the external
	interpretation of $\enot$. Since $H(\ibot)=\set{\bbot}$, the second equality
	follows in the same way. Finally, since $\itop=\ilnot\ibot$ and the internal
	negation is interpreted pointwise, $H(\itop)=\set{\bnot\bbot}=\set{\btop}$.
\end{proof}

Note, however, that the interpretations of some of these constants may
coincide for some algebras $B$, for example $\itop=\ibot$ for the trivial
Boolean algebra and $NB=\itop$ for the two-element Boolean algebra.

\subsection{Downward- and upward-closures}\label{def.clos} 

We define two useful closure operators that help identify when interpretations
of atoms are downward-closed, one of the key properties of standard
dependence logics.

\begin{defin}
	The \emph{downward-closure} and \emph{upward-closure} operators are defined
	by
	\[
		\dwn \phi := \phi \iland \etop
		\quad\text{and}\quad
		\up \phi := \phi \ilor \etop.
	\]
For elements $a$ and $b$ of a Boolean algebra, we write $a\leq b$ for the
Boolean algebra order, i.e., $a\band b=a$.
\end{defin}

\begin{prop}\label{prop:closure_semantics}
	For every Boolean algebra $B$, every homomorphism $H:\Fm\to\P B$, and every
	$\phi\in\Fm$,
	\[
		H(\dwn\phi)=\set{b\in B\mid \text{there exists }a\in H(\phi)\text{ such that }b\leq a},
	\]
	and
	\[
		H(\up\phi)=\set{b\in B\mid \text{there exists }a\in H(\phi)\text{ such that }a\leq b}.
	\]
\end{prop}
\begin{proof}
	By Proposition \ref{prop:constant_semantics}, $H(\etop)=B$. Hence
	\[
		H(\dwn\phi)=H(\phi\iland\etop)=
		\set{a\band c\mid a\in H(\phi), c\in B}.
	\]
	This is exactly the set of elements below some element of $H(\phi)$. The
	argument for $\up$ is the same, replacing meet by join.
\end{proof}
The following proposition validates that these are in fact closure operations and identifies some important relations to subsets corresponding to constants of our language.
\begin{prop}\label{prop:closure_properties}
	For every Boolean algebra $B$, every homomorphism $H:\Fm\to\P B$, and every
	$\phi\in\Fm$, the following hold:
	\begin{enumerate}
		\item $H(\dwn\dwn\phi)=H(\dwn\phi)$ and $H(\up\up\phi)=H(\up\phi)$.
		\item $H(\dwn\phi)=\emptyset$ iff $H(\phi)=\emptyset$, and otherwise
		$\bbot\in H(\dwn\phi)$.
		\item $H(\up\phi)=\emptyset$ iff $H(\phi)=\emptyset$, and otherwise
		$\btop\in H(\up\phi)$.
		\item $H(\dwn\phi)=B$ iff $\btop\in H(\phi)$.
		\item $H(\up\phi)=B$ iff $\bbot\in H(\phi)$.
	\end{enumerate}
	Moreover, $\etop$ and $\ebot$ are the only common fix-points of the two
	operators.
\end{prop}
\begin{proof}
	The first five statements follow immediately from Proposition
	\ref{prop:closure_semantics}. For example, if $H(\phi)$ is non-empty and
	$a\in H(\phi)$, then $\bbot\leq a$, so $\bbot\in H(\dwn\phi)$; and
	$H(\dwn\phi)=B$ holds exactly when every element is below some element of
	$H(\phi)$, which in a Boolean algebra is equivalent to $\btop\in H(\phi)$.
	The remaining cases are similar.

	If $A\subseteq B$ is both downward- and upward-closed and non-empty,
	then from any $a\in A$ we get both $\bbot\in A$ and $\btop\in A$, and hence
	$A=B$. Thus the only common fix-points are $\emptyset$ and $B$, i.e. the
	interpretations of $\ebot$ and $\etop$.
\end{proof}

We will now use these properties to define modal operators.

\subsection{Modal operators}\label{sub:modal_operators}

Any Boolean algebra can be regarded as a Kripke frame in which the
accessibility relation is the partial order of the Boolean algebra. Thus, a
Boolean algebra $B$ together with a homomorphism $H: \Fm \to \P B$ is
naturally a Kripke model in which $B,H,a \Vdash P_i$ iff $a \in H(P_i)$. With
this definition it is clear that $B,H,a \Vdash \phi$ iff $a \in H(\phi)$ for
all propositional formulas $\phi$, i.e., formulas that are built from atoms
using $\ebot,\enot,\eand$ and $\eor$.

From this perspective, we may in LTP, define some modal operators, in particular the necessity operator $\dBox$,
using the partial order of $B$ as the accessibility relation, and the
universal necessity operator $\uBox$.

\begin{defin}\label{def:modal-operators}
	The modal operators $\dBox$ and $\uBox$ are defined by 
	\[
		\dBox \phi := \enot\dwn\enot\phi 
		\quad\text{and}\quad
		\uBox \phi := \enot \up \dwn \enot \phi.
	\]
\end{defin}

\begin{prop}\label{prop:modal_semantics}
	For every Boolean algebra $B$, every homomorphism $H:\Fm\to\P B$, and every
	$\phi\in\Fm$,
	\[
		H(\dBox\phi)=\set{a\in B\mid \text{for all } b\geq a,\ b\in H(\phi)}.
	\]
	Moreover, $H(\uBox\phi)=B$ if $H(\phi)=B$, and $H(\uBox\phi)=\emptyset$
	otherwise.
\end{prop}
\begin{proof}
	By the definition of $\dBox$ and Proposition \ref{prop:closure_semantics}:
	$a\in H(\dBox\phi)$ iff $a\notin H(\dwn\enot\phi)$. This holds iff there is no
	$b\in B\setminus H(\phi)$ such that $a\leq b$, which is equivalent to saying
	that every $b\geq a$ belongs to $H(\phi)$.

	For $\uBox$, observe first that $H(\dwn\enot\phi)=\emptyset$ iff
	$H(\enot\phi)=\emptyset$, iff $H(\phi)=B$. In that case
	$H(\up\dwn\enot\phi)=\emptyset$, and hence $H(\uBox\phi)=B$. If instead
	$H(\phi)\neq B$, then $H(\enot\phi)$ is non-empty, so by Proposition
	\ref{prop:closure_properties}, $H(\up\dwn\enot\phi)=B$. Hence
	$H(\uBox\phi)=\emptyset$.
\end{proof}

This means that the semantics in LTP of $\dBox$ and $\uBox$ directly matches
their standard representation in Kripke semantics for modal logics in the
following sense.


\begin{defin}
	Let $B$ be a Boolean algebra, let $H:\Fm\to\P B$ be a homomorphism, and let
	$a\in B$. The relation $B,H,a\Vdash\phi$ is the usual modal satisfaction
	relation in the Kripke model whose set of worlds is $B$, whose accessibility
	relation for $\dBox$ is the order $\leq$ of $B$, and whose accessibility
	relation for $\uBox$ is the universal relation on $B$. Thus
	\[
		B,H,a\Vdash P_i \quad\text{iff}\quad a\in H(P_i),
	\]
	and the Boolean connectives are interpreted by the standard clauses:
	\begin{align*}
		B,H,a \nVdash \ebot \phantom{soii} \\
		B,H,a\Vdash \enot\phi \phantom{sii}&\quad\text{iff}\quad B,H,a\nVdash\phi,\\
		B,H,a\Vdash\phi\eand\psi &\quad\text{iff}\quad B,H,a\Vdash\phi
			\text{ and } B,H,a\Vdash\psi,\\
		B,H,a\Vdash\phi\eor\psi &\quad\text{iff}\quad B,H,a\Vdash\phi
			\text{ or } B,H,a\Vdash\psi.\\
		B,H,a\Vdash\phi\to\psi &\quad\text{iff}\quad B,H,a\nVdash\phi
			\text{ or } B,H,a\Vdash\psi.
	\end{align*}
	Moreover,
	\begin{align*}
		B,H,a\Vdash\dBox\phi &\quad\text{iff}\quad
			B,H,b\Vdash\phi\text{ for all }b\in B\text{ such that }a\leq b,\\
		B,H,a\Vdash\uBox\phi &\quad\text{iff}\quad
			B,H,b\Vdash\phi\text{ for all }b\in B.
	\end{align*}
\end{defin}

\begin{prop}\label{prop:modal}
	If $\phi$ is a formula in the language of modal logic with the two modal box
	operators, i.e., a formula built up from atoms using
	$\ebot,\enot,\eand,\eor,\to,\dBox,$ and $\uBox$; $B$ is a Boolean algebra and $H$ a
	homomorphism $\Fm \to \P B$, then
	\[
		B,H,a \Vdash \phi \quad\text{iff}\quad a \in H(\phi).
	\]
\end{prop}
\begin{proof}
	The proof is by induction on $\phi$. The Boolean cases are immediate from
	the interpretation of the external Boolean connectives. The case for $\dBox$
	is Proposition \ref{prop:modal_semantics}. The case for $\uBox$ follows from
	the second part of Proposition \ref{prop:modal_semantics}, since $\uBox$ is
	interpreted as the universal necessity operator.
\end{proof}

Indeed, the semantics of $\uBox$ are entirely independent of the internal
structure of the Boolean algebra $B$. In this context, $B$ functions simply
as a set of worlds, with $\uBox$ interpreted via the universal accessibility
relation. 
Hence the interpretation of formulas using only the external
Boolean connectives and $\uBox$ is exactly the usual Kripke semantics over
universal frames. The only relevant structure is the set of worlds together
with the valuation; duplicating a world, in the sense of adding a new world
with the same valuation as an existing one, does not change the truth of any
formula in this fragment. Since validity over universal frames is precisely
the modal logic S5, this $\uBox$-fragment of LTP corresponds to S5.

\begin{prop}\label{prop:S5_fragment}
	A formula built up from atoms using $\ebot,\enot, \eand, \eor,\to,$ and $\uBox$ is valid
	in LTP iff it is valid in the modal logic S5.
\end{prop}
\begin{proof}
By Proposition \ref{prop:modal_semantics}, $\uBox$ is interpreted as the
universal modality. Thus every LTP interpretation gives a universal Kripke model,
with the elements of the underlying Boolean algebra as worlds.

Conversely, if a formula in this fragment is not valid in S5, then it fails in
some finite universal Kripke model. By adding copies of worlds with the same
valuation, if necessary, we may assume that the number of worlds is the
cardinality of some finite Boolean algebra. Such duplication does not affect
truth of formulas in the $\uBox$-fragment. We may therefore identify the worlds
with the elements of that Boolean algebra, obtaining an LTP countermodel.
Hence the validities of this fragment are precisely the validities of universal
Kripke frames, i.e., S5.
\end{proof}

The $\uBox$ operation plays an important role in this paper in that it
facilitates the internalisation of classifications of global properties of
homomorphisms into formulas, see Section \ref{sec:def-class}.

Using the $\dBox$ operator and Proposition \ref{prop:modal} we can prove that
the finite Boolean algebras are not enough to define LTP. This proof follows
closely the proof of Theorem 3.12 in \cite{Goranko99}.

\begin{thm}\label{thm:finite_ba}
	The set of finite Boolean algebras is not adequate for LTP.
\end{thm}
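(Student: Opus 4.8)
The plan is to refute adequacy by producing a single closed formula that is valid over every finite Boolean algebra but fails on some infinite one. Since restricting the class of algebras can only discard falsifying models, we always have that $\vDash \phi$ implies $X : {}\vDash \phi$; adequacy of the class $X$ of finite Boolean algebras therefore fails as soon as we exhibit a $\phi$ with $X : {}\vDash \phi$ while $\vDash \phi$ fails. My candidate is the LT-formula obtained by translating Grzegorczyk's formula $\textbf{Grz}$, replacing the modal box by the definable box $\Box_{\scriptscriptstyle{\leq}}$ dual to the diamond-operation $\dwn$.

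First I would make precise the correspondence indicated above: for a Boolean algebra $B$, an LT-homomorphism $H : \Fm \to \P B$ restricts on the atoms to an assignment of arbitrary subsets of $B$, i.e.\ to a Kripke valuation on the frame $(B, \leq)$, and under this identification $\dwn$ computes exactly the diamond of $\leq$ while $\Box_{\scriptscriptstyle{\leq}}$ computes its box. Consequently $B : {}\vDash \textbf{Grz}$ in LT holds if and only if $\textbf{Grz}$ is frame-valid on $(B,\leq)$. By the cited characterisation for extensions of S4, $\textbf{Grz}$ is frame-valid on the partial order $(B,\leq)$ precisely when that order is finite, that is, precisely when $B$ is a finite Boolean algebra.

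With this in hand the two halves are immediate. Every $B \in X$ is finite, so $(B,\leq)$ is a finite partial order and validates $\textbf{Grz}$; hence $X : {}\vDash \textbf{Grz}$. On the other hand any infinite Boolean algebra $B$ gives an infinite partial order $(B,\leq)$, on which $\textbf{Grz}$ fails: as recorded in the footnote it suffices to exhibit an infinite $\leq$-ascending chain together with a valuation of $P$ alternating along it, and such a chain always exists — if $B$ has infinitely many atoms $a_0, a_1, \ldots$ take $a_0 \leq a_0 \lor a_1 \leq a_0 \lor a_1 \lor a_2 \leq \cdots$, and otherwise $B$ is non-atomic, yielding an infinite descending chain that complementation turns into an ascending one. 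Thus $\vDash \textbf{Grz}$ fails, and combining this with $X : {}\vDash \textbf{Grz}$ contradicts adequacy.

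I expect the main obstacle to lie in the transfer step of the second paragraph rather than in the counting arguments. One has to verify that the LT-translation built from $\dwn$ is semantically faithful on every frame $(B,\leq)$ — in particular that the falsifying Kripke valuation supplied by the S4 result genuinely arises as the restriction of an honest LT-homomorphism $\Fm \to \P B$ — so that the modal characterisation imports without loss into the LT-validity relation $B : {}\vDash$. Once that faithfulness is checked, the elementary observation that an infinite Boolean algebra (equivalently, one that is non-atomic or has infinitely many atoms) always carries an infinite chain completes the argument.
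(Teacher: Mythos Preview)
Your proposal is essentially the paper's own argument: both reduce the question to Grzegorczyk's formula via the definable $\Box_{\scriptscriptstyle{\leq}}$ dual to $\dwn$, observing that an LT-homomorphism into $\P B$ is nothing but a Kripke valuation on the partial order $(B,\leq)$, so that $B:{}\vDash\textbf{Grz}$ coincides with frame-validity of $\textbf{Grz}$ on $(B,\leq)$, which holds exactly for finite $B$.

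One small slip worth tightening: your case split ``infinitely many atoms, otherwise non-atomic'' is not exhaustive, since an infinite Boolean algebra may carry only finitely many atoms without being atomless (e.g.\ $2\times C$ for $C$ atomless). The fix is immediate: if $B$ is infinite with finitely many atoms, let $a$ be their join; then $\lnot a\neq\bot$ and the interval $[\bot,\lnot a]$ is a nontrivial atomless Boolean algebra, so it contains an infinite strictly descending chain, which complementation turns into the required ascending one.
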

\begin{proof}
	The following formula called \emph{Grzegorczyk's formula}
	\[
		\text{Grz:}\quad\dBox(\dBox(P\to \dBox P)\to P)\to P
	\]
	is known to be valid on all finite partially ordered Kripke frames, see \cite{Bull1984}.
	Thus, by Proposition \ref{prop:modal}, $\vDash_B \text{Grz}$ for all finite
	Boolean algebras $B$.

	On the other hand, if $B=\P \mathbb N$ and $H: \Fm \to \P B$ is such that
	$H(P)=B \setminus \{\emptyset,\{0,1\},\{0,1,2,3\},\ldots\}$, it is a
	straightforward calculation to check that $H(\text{Grz})=H(P) \neq B$.
	Therefore, $\nvDash_{\P {\mathbb N}} \text{Grz}$.
\end{proof}{}

Recently, Knudstorp \cite{Knudstorp2025} proved an undecidability result for the
related logic HBML. This result also transfers to LTP, and hence the set of valid
formulas of LTP,
$$	\set{\phi\in\Fm | \vDash \varphi},$$
is non-recursive. This also gives an alternative proof of Theorem
\ref{thm:finite_ba}: If the finite Boolean algebras were adequate for LTP,
then validity in LTP would be decidable. Indeed, by adequacy, any invalid
formula would have a countermodel over some finite Boolean algebra; since
finite Boolean algebras can be effectively enumerated, and validity over any
fixed finite Boolean algebra is decidable by checking the finitely many
relevant homomorphisms, invalidity would be recursively enumerable. On the
other hand, validity is recursively enumerable by the finite sound and
complete proof system for LTP. Hence validity would be decidable,
contradicting the non-recursiveness result.

\subsection{Strict negation}

When, in Section \ref{sec:axiomatising}, we relate LTP to other team semantics
described in the literature, we need to consider a third type of negation. We will
denote this negation by $\slnot$ and start by introducing it as an operation on an 
algebra  of the form $\P B$ where $B$ is Boolean.

\begin{defin}
  For a subset $A$ and an element $b$ of a Boolean algebra $B$ we say that $b$
	is \emph{separate} from $A$ if for all $a \in A$, $b\band a = \bbot$. We
	define $\slnot A$ as the set of all elements separate from $A$, i.e.,
	$$\slnot A = \set{ b \in B | \text{for all } a\in A, b\band a = \bbot }.$$
\end{defin}

This operation is definable as an operation in LTP, and we call it
\textit{strict negation}:

\begin{defin}\label{strict_negation}
	In LTP we define the unary operation $\slnot$ by $$ \slnot \phi= \enot\up(
	\dwn \phi\eand \NB ).$$ 
\end{defin}

\begin{prop}\label{slnot_form_to_subset}
	For every Boolean algebra $B$, every homomorphism $H:\Fm \to \P B$, and
	every formula $\phi\in\Fm$ we have $$ H(\slnot \phi) = \slnot H(\phi).$$
\end{prop}
\begin{proof}
	We first convince ourselves that for any algebra, homomorphism and formula
	as prescribed we have
	$$H(\dwn \phi \eand \NB)= \set{ a \in  B |  a \neq \bbot \text{ and } a\leq b
	\text{ for some } b\in H(\phi)}.$$
	Therefore $H(\up(\dwn \phi \eand \NB))$ is the set of elements of $B$ that
	have non-trivial intersection with some element in $H(\phi)$, i.e.,
	$$H(\up (\dwn \phi \eand \NB))= \set{ a\in B | a\band b \neq \bbot \text{ for
	some } b\in H(\phi)}.$$
	This is exactly the complement of $\slnot H(\phi)$, and thus $
	H(\slnot\phi)=  H (\enot\up( \dwn \phi\eand \NB ))= \slnot H(\phi)$.
\end{proof}

The purpose of introducing strict negation in this paper is to clarify the
relationship between LTP and more traditional forms of propositional team
semantics based on teams of valuations, see Section~\ref{sec:valuational}. At
the same time, we wish to highlight the following properties of $\slnot$ in
the framework of LTP.

\begin{prop}\label{prop_strict_neg}
	\begin{enumerate}
		\item $\vDash P \to \slnot \slnot P$ 
		\item $\nvDash \slnot \slnot P \to P $ 
		\item $\vDash \slnot  \slnot \slnot  P \rightarrow \slnot P$ 
		\item $\nvDash P \ilor \slnot P$
	\end{enumerate}  
\end{prop}
\begin{proof}
For a semantic proof of (1) we need to show that for any Boolean algebra $B$ and
any $A\subseteq B$ we have that $A\subseteq \slnot \slnot A$. In other words, if
$a\in A$ then for all $b\in \slnot A$ we have that $a\band b= \bbot$, which is
guaranteed by the definition of $\slnot A$. 

As a simple proof of statement (2), consider for any Boolean algebra a
homomorphism $H$ such that $H(P)= \emptyset$. Then $H(\slnot \slnot P) =
\set{\bbot}$. Clearly then $H(\slnot \slnot P)\nsubseteq H( P)$ and the
statement is proved.

For (3) assume for some $A\subseteq B $ that $b \in \slnot \slnot \slnot A$.
Then for all $a\in A$, by the semantic proof of (1) we have that $a\in\slnot
\slnot A$, and thus $b\band a = \bbot$, and thus $b\in \slnot A$. Hence $\slnot
\slnot \slnot A \subseteq \slnot A$. This proves statement (3).

For (4) let $B$ be the four element Boolean algebra $\{\bbot,a,b,\btop\}$ and $A=\{\bbot,a,b\}$. Then $\slnot A =\{\bbot\}$ and $A \cup \slnot A = A \neq B$. 
\end{proof}

The schema $P \ilor \slnot P$ will later be used in Section \ref
{sec:axiomatising} to axiomatise the propositional dependence logic 
$\text{PT}^+$. Statement (1) can also be proved directly in the deduction
system given in Section \ref{deduction-system}, or rather the corresponding
proof-theoretic statement $  p\of P \vdash p\of\slnot\slnot P$. Such a
derivation is presented in Figure \ref{fig:derivation} as an example of a
non-trivial deduction.

\begin{sidewaysfigure}
\vspace{.6\textwidth}
\hspace{10pt}
$
\infer[\ni_1]{p\of \slnot \slnot P}{
	\infer[\ioe_2]{p\of \ebot}{
		[p\of \up(\dwn \slnot P \eand \NB)]^1
		&
		\infer[\iae_3]{p\of \ebot}{
			[q \of \dwn\slnot P \eand \NB]^2
			& \hspace{-50pt}
			\infer[\be]{p\of \ebot}{
				\infer[\ne]{s\of \ebot}{
					[s\of \slnot P]^3
					&\hspace{-180pt}
					\infer[\taut /\sub]{s\of \up (\dwn P \eand \NB)}{
						\infer[\ioi]{(s\band t)\bor(s\band r)\bor (s\band \bnot (t\bor r))\of \up(\dwn P \eand \NB)}{
							\infer[\ai]{(s\band t )\bor (s\band r) \of \dwn P \eand \NB}{
								\infer[\taut / \sub]{(s\band t)\bor(s\band r)\of\dwn P}{
									\infer[\iai]{((s\band t)\bor r)\band s \of \dwn P}{
										\infer[\sub]{(s\band t)\bor r \of P}{
											\infer[\taut ]{p= (s\band t) \bor r}{
												[p=q\bor r]^2
												&
												[q= s\band t]^3}
											&
											p\of P}
										&
										\infer[\ni]{s \of \etop}{[s\of \ebot]}}}
								&
								\infer[\ni_4]{(s\band t)\bor(s\band r)\of \NB}{
									\infer[\ne]{(s\band t)\of \ebot}{
										\infer[\sub]{(s\band t )  \of \NB}{
											[q=s\band t]^3
											&
											\infer[\ae]{q\of \NB}{
												[q\of \dwn \slnot P \eand \NB]^2}}	
										& \hspace{-10pt}
										\infer[\taut/\sub]{(s\band t) \of \ibot}{
											\infer[\ine]{\bnot\bnot(s\band t) \of \ibot}{
												\infer[\taut]{\bnot (s\band t) \of \itop}{
													\infer[\ini]{\bnot((s\band t) \bor (s\band r)) \of \itop }{
														[(s\band t)\bor (s\band r) \of \ibot]^4}}}}}}}						
							&\hspace{-60pt}
							\infer[\ni]{s \band \bnot (t\bor r) \of \etop}{
								[s\band \bnot (t\bor r)\of \ebot]}}}}}}}}
$
\caption{Derivation showing that $p:P \vdash p:\slnot\slnot P$. Here
$\taut/\sub$ is a shorthand for a sequence of applications of $\taut$ and $\sub$ and $p,q,r,s,$ and $t$ are atomic labels.}
\label{fig:derivation}
\end{sidewaysfigure}

\section{Definable classes of homomorphisms}\label{sec:def-class}

The semantic definition of entailment in LTP is given as a universal
satisfaction of a property, evaluated independently for all homomorphisms and
for all Boolean algebras. To encode standard propositional dependence logic
within the LTP framework, we define specific logics by restricting the class
of homomorphisms under consideration. Furthermore, we introduce a notion
of \textit{definability} in LTP for such classes. By using the $\uBox$
operator we also obtain axiomatisations in LTP for logics of such definable
classes of homomorphisms.

\begin{defin}
	Let $B$ be a Boolean algebra and $H$ a homomorphism $\Fm \to \P B$. We
	define the \emph{local entailment} of $H$ denoted $\Delta \vDash_H \phi $ for LTP-formulas in
	the expected way:
	$$\Delta \vDash_H \phi \quad \text{ iff } \quad
	\bigcap_{\delta\in \Delta} H(\delta) \subseteq H(\phi).$$
	Given a class of homomorphisms $\mathcal{H}$ (not necessarily all to the
	same algebra), we define the corresponding entailment relation $$
	\Delta \vDash_{\mathcal H} \phi \quad \text{iff}\quad\text{for all }  H\in
	\mathcal{H}, \Delta \vDash_H \phi.$$
\end{defin}

A class of homomorphisms is definable if there is a set of formulas that are
valid precisely for the homomorphisms of that class.

\begin{defin}
	A class of homomorphisms $\mathcal{H}$ is \textit{definable} in LTP if there
	exists a set of formulas $\Pi$ such that for all homomorphisms $H: \Fm \to
	\P B$,
	$$H\in \mathcal{H} \quad \text{iff } \quad \vDash_H \pi \text{
		for all } \pi \in \Pi. $$
\end{defin}

Note that to axiomatise the logic of a class of homomorphisms it is not enough
to take a defining set of formulas $\Pi$ as axioms. The reason is that
definability imposes a global condition on homomorphisms, whereas the
entailment is a local condition. 

As a simple example, consider the class $\mathcal{H}_\one$ of homomorphisms into
the powerset of a one-element Boolean algebra. It is clear that this class is
defined by the formula $\ibot$: 
$$H\in\mathcal {H}_\one \quad \text {iff}\quad \vDash_H \ibot.$$ 
On the other hand we have that $\vDash_{\mathcal{H}_\one} \itop$ but
$\ibot \nvDash \itop$.

However, by using the universal modality $\uBox$, we can internalise global
conditions and ensure that $$x\in H(\uBox \eta) \quad \text{ iff} \quad
H(\eta)= B.$$ 
That is, for all $x \in B$, $$ x\in H(\uBox \eta)\quad \text{iff}
\quad \vDash_H \eta.$$ We can then conclude the following theorem:

\begin{thm} \label{Def_to_ax}
	Assume a class of homomorphisms $\mathcal{H}$ is defined by a set of formulas
	$\Pi$ and let $\uBox\Pi= \set{\uBox \pi | \pi\in \Pi}$. Then 
	$$ \Delta, \uBox \Pi \vDash \phi \quad \text{ iff } \quad
\Delta \vDash_{\mathcal H} \phi.$$
\end{thm}

This means that $\uBox \Pi$ serves as an axiomatisation in LTP of the logic given
by the restriction to homomorphisms of the class $\mathcal{H}$ that $\Pi$ defines.

\begin{proof}
	Assume first that $\Delta, \uBox\Pi \vDash \phi$. Let
	$H:\Fm\to \P B$ be a homomorphism in $\mathcal{H}$, and suppose that
	$b\in \bigcap_{\delta\in\Delta} H(\delta)$. Since $\mathcal{H}$ is defined
	by $\Pi$, we have $\vDash_H \pi$ for every $\pi\in\Pi$, that is,
	$H(\pi)=B$. Hence, by the semantics of $\uBox$, we have
	$H(\uBox\pi)=B$ for every $\pi\in\Pi$. It follows that
	\[
		b\in \bigcap_{\delta\in\Delta} H(\delta)
		\cap
		\bigcap_{\pi\in\Pi} H(\uBox\pi).
	\]
	By the assumption $\Delta,\uBox\Pi\vDash \phi$, we conclude that
	$b\in H(\phi)$. Thus $\Delta\vDash_H\phi$. Since $H\in\mathcal{H}$ was
	arbitrary, $\Delta\vDash_{\mathcal{H}}\phi$.

	Conversely, assume that $\Delta\vDash_{\mathcal{H}}\phi$. Let
	$H:\Fm\to \P B$ be any homomorphism, and suppose that
	\[
		b\in \bigcap_{\delta\in\Delta} H(\delta)
		\cap
		\bigcap_{\pi\in\Pi} H(\uBox\pi).
	\]
	Then $H(\uBox\pi)$ is non-empty for every $\pi\in\Pi$. Since, for every
	formula $\eta$, the set $H(\uBox\eta)$ is either $B$ or $\emptyset$, it
	follows that $H(\uBox\pi)=B$ for every $\pi\in\Pi$. Hence $H(\pi)=B$ for
	every $\pi\in\Pi$, i.e. $\vDash_H\pi$ for every $\pi\in\Pi$. Since
	$\Pi$ defines $\mathcal{H}$, this means that $H\in\mathcal{H}$. By
	$\Delta\vDash_{\mathcal{H}}\phi$, we therefore have
	$\Delta\vDash_H\phi$, and since $b\in\bigcap_{\delta\in\Delta}H(\delta)$,
	we get $b\in H(\phi)$. Thus $\Delta,\uBox\Pi\vDash\phi$.
\end{proof}

The preceding theorem shows that the universal modality $\uBox$ turns global
conditions on homomorphisms into ordinary assumptions of LTP. 
In particular, we can restrict the semantics to definable classes of
homomorphisms by means of axioms, rather than making such restrictions an
exterior part of the definition of the semantics as done in 
\cite{Quadrellaro2020,Quadrellaro2021,Bezhanishvili2021,Puncochar2017,Puncochar2021}.

Furthermore, the universal modality $\uBox$ paves the way towards a deeper
algebraic understanding of LTP. As defined in Definition \ref{def_entailment}, 
LTP is a semilattice-based logic using $(\P
(B),\subseteq)$ where $B$ is a Boolean algebra. These semilattices have
maximum elements, $B$, picked out by a term of the language: $\etop$. One can
therefore also consider the \emph{assertional companion logic} of LTP,
denoted $\mathrm{LTP}^{\etop}$; for details, see
\cite[Definition 7.36 and surrounding discussion]{Font16}. This is the logic
defined by the following entailment relation:

\begin{defin}
	Let $\Delta\cup \set{\psi}\subseteq \Fm$. Then $\Delta \vDash^\etop \psi$
	iff for all Boolean algebras  $B$ and all homomorphisms $H:\Fm \to \P B$:
	if $H(\delta)= H(\top)$ for all $\delta \in \Delta$ then $H(\psi)= H
	(\top)$. We denote this logic by $\text{LTP}^\etop$, the \emph
	{assertional companion of} LTP.
\end{defin}

Assertional logics are more central in the broader study of abstract algebraic
logic as presented by Font \cite{Font16} with many theories and results
directly applicable. They are also more tightly connected to Hilbert systems
from which one often can form an assertional algebraic semantics by the
Lindenbaum--Tarski algebras formed from quotienting a term algebra over
deductively closed theories.  

Importantly, semilattice-based and assertional companion logics share the same
theories, i.e., for LTP and $\text{LTP}^\top$ we have that $\vDash \psi$  if
and only if $\vDash^\etop \psi $. They do, however, differ in their entailment
notions. By the deduction theorem (Theorem \ref{th:deduction}) we can encode
any entailment with a finite set of premises
$\delta_1,\dots,\delta_n\vDash \psi$ as the theorem $\vDash (\delta_1\to
(\ldots \to (\delta_n \to \psi)\ldots )$, and thus investigate LTP also through the
study of $\text{LTP}^\etop$. Using the universal box $\uBox$ we can also
encode $\text{LTP}^\etop$ as theorems in LTP, by the following proposition
serving as a deduction theorem for $\text{LTP}^\etop$. 

\begin{prop}
	For all LTP-formulas $\phi$ and $\psi$, $\uBox \phi \vDash \psi$ iff for
	every Boolean algebra $B$ and every homomorphism $H : \Fm \to \P B$, $H
	(\phi)=H(\etop)$ implies $H(\psi)=H(\etop)$. Consequently, for $\text
	{LTP}^\etop$: $$\phi \vDash^\etop \psi \text{  if and only
	if } \vDash^\etop \uBox \phi \to \psi.$$
\end{prop}
\begin{proof}
  Since $H(\etop)=B$, and $H(\uBox\phi)=B$ iff $H(\phi)=B$, the statement
  $\uBox\phi \vDash \psi$ says exactly that, for every Boolean algebra $B$
  and every homomorphism $H:\Fm\to\P B$, if $H(\phi)=H(\etop)$, then $H
  (\psi)=H(\etop)$. The consequently part then follows from the deduction
  theorem of LTP (Theorem \ref{th:deduction}) and the clear fact that LTP and
  $\text{LTP}^\etop$ have the same theorems.
\end{proof} 

Many results and categorisations of properties outlined by Font \cite
{Font16} are applicable for both LTP and $\text{LTP}^\etop$ through these
observations. That said, further exploration in this direction will
nevertheless have to be left for future work.   

\section{Axiomatising valuational team semantics}\label{sec:axiomatising}

In this section we show how standard team semantics based on valuations relate
to the logic LTP. This construction essentially follows the construction
presented in \cite{LorimerOlsson2022} with some refinements and
generalisations. 

We begin by introducing the propositional team logic $\text{PT}^+$ through
valuational team semantics, where formulas are interpreted over sets of teams
of valuations. This logic is one of the more expressive propositional team logics in the literature, and many other logics are directly definable in it.
We then reformulate these semantics using the algebra $\P\P 2^
{\mathbb{N}}$ and a specific homomorphism $H_V$, aligning the description with the semantics
of LTP. This shows that LTP conservatively extends a logic weaker than $\text
{PT}^+$. Next, we identify the homomorphism $H_V$ as part of a class $\mathcal{H}_{\text
{PV}}$ of homomorphisms that are definable and axiomatisable in LTP. Finally,
we prove that $\text{PT}^+$ is axiomatised as a fragment of LTP via
the axioms for $\mathcal{H}_{\text{PV}}$, using two lemmas: one establishing
embedding-preserving properties of interpretations, and another showing that
all relevant homomorphisms can be represented via $H_V$ in $\P2^\mathbb{N}$.
The construction developed here has some striking similarities with the algebraic
treatment of downward-closed team logics referenced in Section 
\ref{subsub: others approaches}. We end the section by pointing out some of
these similarities, and also that some seemingly equivalent choices are in
fact importantly different.

\subsection{Valuational team semantics and $\text{PT}^+$}\label{sec:valuational}

In this section we will present the propositional dependence logic that we
will focus on in this paper. \emph{Strong propositional team logic}, $\text
{PT}^+$, is one of the strongest logics presented in
\cite{Yang2017} and we will define its semantics in a way that easily translates to the framework of LTP.

\begin{defin}\label{def:PTplus}
The set of formulas $\Fm_{\text{PT}^+}$ of \emph{strong propositional team logic},
$\text{PT}^+$,  is generated by the following grammar $$ \phi::= P_i
\mathrel|  \slnot P_i \mathrel|\ibot \mathrel| \NB\mathrel|
\phi \ilor \phi\mathrel|   \phi \land \phi \mathrel|  \phi \eor \phi $$ and
its valuational team semantics can be described by defining the denotations
for formulas $\den{ \phi } \subseteq \P 2^\mathbb{N} $
recursively for cases of the main connective as follows:
\begin{align*}
	\den{  P_i } &= \set{ X | \text {for all } s \in X, s(i)= 1}\\
	\den{ \slnot P_i } &= \set{ X | \text {for all } s \in X, s(i)= 0}\\
	\den{ \ibot } &= \set{ \emptyset}\\
	\den{ \NB } &= \set {X| X\neq \emptyset} \\
	\den{ \phi \ilor \psi } &= \set{ X\cup Y | X\in \den{
	\phi }, Y\in \den{ \psi }} \\
	\den{ \phi \eand \psi } &= \den{ \phi } \cap
	\den{ \psi }\\
	\den{ \phi \eor \psi } &= \den{ \phi } \cup
	\den{ \psi }\\
\end{align*}
The logical entailment of $\text{PT}^+$ is then defined as follows:
$$\Delta \vDash_{\text{PT}^+} \phi \quad \text{iff} \quad
\bigcap_{\delta \in \Delta}  \den{ \delta } \subseteq \den{
\phi }.$$
Elements $s\in 2^\mathbb{N}$ are viewed as \textit{valuations} for the set of
propositional variables, and sets of valuations $X\in \P 2^\mathbb{N}$ are
referred to as \textit{teams (of valuations)}.
\end{defin}

Note that the occurrence of $\slnot$ is restricted to propositional variables.
This follows the standard presentation of propositional team logics, where
strict negation is not a general formula-forming operation; see, for example,
\cite{Yang2016,Yang2017,Lueck2020}. This contrasts with LTP, in which
$\slnot \varphi$ is definable for arbitrary formulas.

There is no standard notation for the connectives in the literature, and we have
chosen notation that corresponds best to the notation for LTP. Table
\ref{translation} indicates the correspondence between our notation and
notation elsewhere. 
\begin{table}
	
\begin{tabular}{cccc}
	\toprule
	\cite{Yang2017} & \cite{Lueck2020} & \cite{Yang2022} & This paper \\
	\midrule
	$p_i$  		& $p_i$ 	& $p_i$ 	& $P_i$\\
	$\neg$ 		& $\neg$	& $\neg$	& $\slnot$ \\
	$\bot$ 		& $\bot$ 	& $\bot$	& $\ibot$ \\
	$\otimes  $ & $\vee $	& $\vee$	& $ \ilor $ \\
	$\wedge$ 	& $\wedge$	& $\wedge$  & $\eand $ \\
	$ \vee $ 	& $\ovee$	& $\ilor$   & $ \eor $ \\
	$\text{NE}$ & ~ 		& ~			& $\NB$ \\
	\bottomrule
\end{tabular}
\caption{Correspondence between notations in the current paper and other
relevant papers on propositional team logics.}\label{translation}
\end{table}

As defined, it is clear that for the set of formulas, including defined
connectives, we have that $\Fm_{\text{PT}^+}\subset
\Fm$ even though the logics are described using different semantics. In this
section we will find axioms in the language of LTP that axiomatise $\text{PT}^+$ 
in LTP. In the literature there are multiple weaker propositional team logics
described and studied, in particular propositional logics of dependence \cite
{Yang2016}. Many important logics, however, are given as, or are expressively
equivalent to, logics that can be given as fragments of $\text{PT}^+$. This
means that our result regarding the axiomatisation of $\text{PT}^+$ in LTP
will be directly applicable to these logics too. Table \ref
{list_of_logics} gives an overview of some of these weaker logics that are
directly definable as syntactic fragments of $\text{PT}^+$. The logic $\text{PD}^\vee$ 
is described in \cite{Yang2016}, and the others are described
in \cite{Yang2017}. Table \ref{ind_expr_logics} further lists logics that are
expressible in, or equivalent to, logics in Table 
 \ref{list_of_logics}, but not directly definable as a syntactic fragment. These
 results are presented in \cite{Yang2016,Yang2017} and we refer to these
 papers for more details.

\begin{table}
	\begin{tabular}{ll}
	\toprule
		Propositional team logics 								& Connectives \\
		\midrule 
		Classical propositional logic (CPL) 					&$\slnot P_i,\ibot,\ilor,\land$\\
		Strong classical propositional logic ($\text{CPL}^+$) 	&$\slnot P_i,\ibot,\ilor,\land,\NB$\\
		Propositional union closed logic ($\text{PU}$)		&$\slnot P_i,\ibot,\ilor,\land,\circledast$ \\
		Strong propositional union closed logic ($\text{PU}^+$)&$\slnot P_i,\ibot,\ilor,\land,\circledast,\NB$ \\
		Propositional dependence logic w. int. disj. ($\text{PD}^\vee$)& $ \slnot P_i,\ibot,\ilor, \land,\lor$ \\
		Propositional team logic ($\text{PT}$)				& $\slnot P_i,\ibot,\ilor,\land,\lor,\circledast$ \\
		Strong propositional team logic ($\text{PT}^+$)		& $\slnot P_i,\ibot,\ilor,\land,\lor,\NB$\\
		\bottomrule
	\end{tabular} 
\caption{Names and included connectives of logics described in
\cite{Yang2016} and \cite{Yang2017} as fragments of $\text{PT}^+$. The
connective $\circledast$ can be defined in $\text{PT}^+$ as $\phi \circledast
\psi := (\phi \eand \NB) \protect\ilor (\psi \eand \NB)$.  As in the standard
presentations of these logics, $\slnot P_i$ indicates that strict negation is
allowed only in front of propositional variables.}
\label{list_of_logics}
\end{table}
\begin{table}
	\begin{tabular}{ll}
		\toprule
		Propositional team logics								&  \\
		\midrule 
		Propositional inquisitive logic (InqL)					&equivalent to $\text{PD}^\vee$ \\
		Propositional intuitionistic dependence logic (PID)		&equivalent to $ \text{PD}^\vee$\\
		Propositional dependence logic (PD) 					&equivalent to $\text{PD}^\vee$\\
		Strong  propositional dependence logic (PD$^+$)			&expressible in $\text{PT}^+$\\
		Propositional independence logic (PI) 					&expressible in $\text{PT}$ \\
		Strong propositional independence logic (PI$^+$)		&expressible in $\text{PT}^+$ \\
		Propositional inclusion logic (PInc)					&expressible in $\text{PU}$\\
		Strong propositional inclusion logic (PInc$^+$)			&expressible in $\text{PU}^+$\\
		Full propositional team logic (FPT) 					&equivalent to $\text{PT}^+$ \\ 
		\bottomrule
	\end{tabular} 
	\caption{List of further team logics expressible in $\text{PT}^+$ or any of its fragments named in Table \ref{list_of_logics}. The first three appear in \cite{Yang2016} and the last seven appear in \cite{Yang2017}.}
	\label{ind_expr_logics} 
\end{table}

Observe that the denotations of valuational team semantics are elements of the
set $\P\P 2^{\mathbb{N}}$, which can be interpreted as a model of LTP by
interpreting $\P 2^{\mathbb{N}}$ as a Boolean algebra using the standard set
operations. We will refer to this as \textit{the valuation model}. With this
reading we can see that the interpretation of atomic formulas imposes a
specific homomorphism that maps every atomic formula to the set of teams for
which every member evaluates it to true.

\begin{defin}
Let $H_V$ denote \textit{the valuation homomorphism}, i.e., the unique homomorphism $H_V : \Fm \to \P\P
2^\mathbb{N}$ such that
\[ 
H_V(P_i) = \set { X\in \P 2^{\mathbb{N}} | \text{ for all } s\in X, s(i)= 1 }
= \P \set{ s\in 2^{\mathbb{N}} | s(i) = 1}.
\] 
\end{defin} 

The logic $\text{PT}^+$ corresponds to LTP restricted to the homomorphism
$H_V$, as established by the following proposition. The proof is
straightforward.

\begin{prop}\label{directThm}
	For all formulas $\Delta\cup \{\phi\} \subseteq \Fm_{\mathrm{PT}^+}$:
	\[
		 \Delta \vDash_{\mathrm{PT}^+} \phi \quad \text{ iff } \quad \Delta \vDash_{H_V} \phi.
	\]
\end{prop}

It follows directly that if $\Delta \vDash\phi$, then $ \Delta \vDash_{\mathrm{PT}^+} \phi$.

\subsection{Axiomatising a specific class of homomorphisms}\label{Ax-class-hom}

We observe that the valuation homomorphism $H_V$ has the following special
property. 
\[ 
H_V (P_i)= \P X \text{ for some } X \in \P 2^{\mathbb{N}}
\]
Algebraically speaking, every propositional variable is mapped to a non-empty
principal ideal of the Boolean algebra on $\P2^\mathbb{N}$, that is, a
subset $A$ of the Boolean algebra $B$ such that there is a maximal element
$a\in A$ generating $A$, i.e., such that $$A= \set{ b\in B | b\leq a}.$$ In
other words, $A$ is downward-closed and $\bigvee A\in A$.

We can in fact express the property of being a principal ideal in LTP in the
following formula akin to the excluded middle. Here $\slnot$ is the
strict negation of Definition~\ref{strict_negation}, and hence is available
for arbitrary LTP-formulas.

\begin{thm}\label{excluded_middle} 
	For all Boolean algebras $B$, all homomorphisms $ H: \Fm \to \P B$, and all
	formulas $\phi \in \Fm$ we have $$ \vDash_H \phi \ilor \slnot \phi
	\quad \text{iff}\quad H(\phi) \text{ is a principal ideal} $$
\end{thm}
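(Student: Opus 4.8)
The plan is to reduce the biconditional to a purely algebraic fact about a subset of $B$ and then treat each direction separately. Writing $A = H(\phi)$, the left-hand side $H : {}\vDash \phi \ilor \slnot \phi$ unfolds to $H(\phi \ilor \slnot \phi) = B$, since local entailment from the empty theory amounts to $B \subseteq H(\phi \ilor \slnot \phi)$. By Theorem \ref{slnot_form_to_subset} we have $H(\slnot \phi) = \slnot A$, and by the definition of $\ilor$ the condition becomes
$$ \set{a \lor b | a \in A,\ b \in \slnot A} = B, $$
where $\slnot A = \set{b \in B | b \land a = \bot \text{ for all } a \in A}$. So it suffices to prove that this equality holds exactly when $A$ is a principal ideal.

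For the direction from right to left, I would suppose $A = \set{b | b \leq a^\ast}$ for some $a^\ast \in B$. Since $a^\ast$ is the greatest element of $A$, one checks that $\slnot A = \set{b | b \land a^\ast = \bot} = \set{b | b \leq \lnot a^\ast}$. Then for an arbitrary $c \in B$ the decomposition $c = (c \land a^\ast) \lor (c \land \lnot a^\ast)$ exhibits $c$ as $a \lor b$ with $a = c \land a^\ast \in A$ and $b = c \land \lnot a^\ast \in \slnot A$; hence $A \ilor \slnot A = B$.

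The converse is the substantive direction. Assuming $A \ilor \slnot A = B$, the key move is to apply the hypothesis to the single element $\top$: there are $a_0 \in A$ and $b_0 \in \slnot A$ with $a_0 \lor b_0 = \top$. Because $b_0 \in \slnot A$ and $a_0 \in A$, we also have $a_0 \land b_0 = \bot$, so by uniqueness of complements $b_0 = \lnot a_0$. From $\lnot a_0 \in \slnot A$ it follows that $a \land \lnot a_0 = \bot$, i.e.\ $a \leq a_0$, for every $a \in A$; thus $a_0$ is the greatest element of $A$, so $a_0 = \bigvee A \in A$. It remains to establish downward closure: given $c \leq a_0$, I would write $c = a \lor b$ with $a \in A$, $b \in \slnot A$; then $b \leq c \leq a_0$ while $b \land a_0 = \bot$ forces $b = \bot$, so $c = a \in A$. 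Therefore $A = \set{c | c \leq a_0}$, a principal ideal.

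I expect this converse to be the main obstacle, specifically producing the generator $a_0$ and verifying downward closure without assuming $B$ is complete. The trick of testing the hypothesis at $\top$ and reading off $b_0 = \lnot a_0$ is what makes both the existence of a maximum of $A$ and the closure argument go through uniformly, and it also covers the degenerate case where $B$ is the trivial one-element algebra.
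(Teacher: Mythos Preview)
Your proof is correct and follows essentially the same approach as the paper: both use the decomposition $c = (c \land a^\ast) \lor (c \land \lnot a^\ast)$ for the easy direction, and both instantiate the hypothesis at $\top$ to extract a greatest element for the hard direction. The only difference is order---the paper establishes downward closure first (via an arbitrary $a \in A$) and then finds the generator, whereas you find the generator $a_0$ first and use it to streamline the closure argument---but the underlying ideas are identical.
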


The proof uses the following elementary observation, which will also be used
later. We state it separately for ease of reference.

\begin{lemma}\label{negated_principal}
	For a Boolean algebra $B$, if $A\subseteq B$ is a principal ideal with
	maximal element $a\in B$, then $\slnot A$ is a principal ideal with
	maximal element $\bnot a$.
\end{lemma}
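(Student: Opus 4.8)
The plan is to prove Lemma \ref{negated_principal} by a direct computation of $\slnot A$ from the definition of strict negation, using the defining property of a principal ideal. Recall that $A$ being a principal ideal with maximal element $a$ means precisely that $A = \set{b \in B \mid b \leq a}$; I would take this as my working characterization throughout. The goal is to show $\slnot A = \set{b \in B \mid b \leq \lnot a}$, which is exactly the assertion that $\slnot A$ is the principal ideal with maximal element $\lnot a$.

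First I would unpack the definition: $b \in \slnot A$ iff $b \land c = \bot$ for all $c \in A$. The key observation is that, since $A$ is downward closed and $a$ is its maximal element, the condition ``$b \land c = \bot$ for all $c \in A$'' collapses to the single condition $b \land a = \bot$. Indeed, the forward direction is immediate since $a \in A$; conversely, if $b \land a = \bot$ then for any $c \in A$ we have $c \leq a$, so $b \land c \leq b \land a = \bot$, giving $b \land c = \bot$. Thus $\slnot A = \set{b \in B \mid b \land a = \bot}$.

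Next I would translate $b \land a = \bot$ into the order condition $b \leq \lnot a$, which is a standard Boolean algebra identity: $b \land a = \bot$ iff $b \leq \lnot a$ (since $b \land a = \bot$ is equivalent to $b \leq \lnot a$ by complementation and the definition $b \leq c \iff b \land c = b$). This yields $\slnot A = \set{b \in B \mid b \leq \lnot a}$, which is by definition the principal ideal generated by $\lnot a$, with maximal element $\lnot a$. That completes the argument.

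I do not anticipate a genuine obstacle here, as the lemma is essentially a routine unwinding of definitions combined with two elementary Boolean facts (the collapse of the universal quantifier to the maximal element, and the equivalence $b \land a = \bot \iff b \leq \lnot a$). The only point deserving a moment of care is the collapse step, where downward closure of $A$ is what lets a condition over all of $A$ reduce to a condition on the single generator $a$; I would make sure to state explicitly where downward closure and maximality are each used, since this is the crux on which the identification of the maximal element $\lnot a$ rests.
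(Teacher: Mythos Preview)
Your proposal is correct and follows essentially the same approach as the paper. The paper organizes the argument slightly differently---it first proves downward closure of $\slnot A$ as a general fact (for arbitrary $A$) and then separately verifies that $\lnot a$ is the maximum of $\slnot A$---but the substantive content is identical to your direct computation of $\slnot A = \set{b \in B \mid b \leq \lnot a}$, resting on the same two Boolean observations you identify.
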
 

\begin{proof}[Proof of Lemma \ref{negated_principal}]
First note that for all $A\subseteq B$ we have that $\slnot A$ is downward-closed, 
since if $c\leq c'\in \slnot A$ then for all $b\in A$, $ b\band c \leq
b\band c' = \bbot$. What is left to show is that, if $a= \bigvee A$, then
$\bnot a = \bigvee \slnot A$. 

Clearly, for all $a'\in A$ we have $a'\band a = a'$. Hence
\[		
	\bnot a \band a'=\bnot a \band (a'\band a)=	a'\band(\bnot a\band a)	=\bbot,
\]
and therefore $\bnot a\in \slnot A$. 
Furthermore, if $b\in \slnot A$,
then $a\band b = \bbot$ and therefore $\bnot a = \bnot a \bor
(a \band b)= \bnot a \bor b$, i.e., $b\leq
\bnot a$. Thus, $\bnot a $ is an upper bound of $ \slnot A$ included in
the set, and therefore the set is a principal ideal, and $\bnot a = \bigvee
\slnot A$.
\end{proof}

\begin{proof}[Proof of Theorem \ref{excluded_middle}]
For one direction, assume $H(\phi \ilor \slnot \phi)= B$, we then need to
prove that  $H(\phi)$ is a principal ideal. First we establish that it is
downward-closed.  In search of a contradiction, assume that for some $a\in H
(\phi)$ we can find $b\in B$ such that $b\leq a$ and $b\notin H(\phi)$. We
want to show that then $b\notin H(\phi \ilor \slnot
\phi)$. If $b\in H(\phi\ilor \slnot \phi)$, then there exists $c\in H(\phi)$
and $d \in \slnot H(\phi)$ such that $c\bor d= b $. Since $b\leq a \in
H(\phi)$ we have that $$ b\band d \leq a\band d = \bbot$$ since $d\in
\slnot H(\phi)$. But then, since clearly $c\leq b$, we have $$b= b\band b
= (c\bor d ) \band b = ( c\band b )\bor (b\band d)= c\in H(\phi)$$ This is a
contradiction. We can therefore conclude, under the main assumption, that
$H(\phi)$ is downward-closed. Next we show that $\bigvee H(\phi)\in H(\phi)$.
By assumption we have that  $$\btop \in H(\phi \ilor \slnot
\phi)$$ Then there exists $c \in  H(\phi)$ and $d \in H(\slnot \phi )$ such
that $c\bor d=\btop$. Furthermore, for all $a \in H(\phi)$ we have that $$a =
\btop \band a = (c\bor d) \band a = c \band a, $$ since $d \band a  =
\bbot$ for all $a \in H( \phi) $. Consequently $a \leq c $ for all $a
\in H( \phi)$ and thus $ c$ is an upper bound for $H(\phi)$ included in
the set, i.e. $c  = \bigvee H(\phi)$. With downward closure established, this
also means that $H(\phi)$ is a principal ideal.

For the other direction, assume $H(\phi)$ is a principal ideal. Then there
exists $a \in B$ such that $a$ is the top element of $H(\phi)$. Then by Lemma
\ref{negated_principal} we see that  $\bnot a$ is the top element of the
principal ideal $H(\slnot \phi)$. Therefore, since $a \bor \bnot a =
\btop$ we have for every $b\in B$ that $$b= b \band (a\bor \bnot a)= (b\band a )
\bor (b\band \bnot a).$$ Being the top elements of the respective principal
ideals we observe that $$b\band a \in  H(\phi)\quad \text{and} \quad b
\band \bnot a \in H(\slnot \phi)$$ and conclude that  $ H(\phi  \ilor
\slnot \phi)= B$, in other words $H\vDash \phi \ilor \slnot \phi$.
\end{proof}

From this theorem we can directly conclude that
\[
\vDash_{H_V} P_i \ilor \slnot P_i \quad \text{for all } i \in
\mathbb{N}.
\]
We will see that this is the crucial categorisation of the homomorphisms that
relate to valuational team logics. We therefore identify the class defined by
these formulas, and the corresponding axiomatisation as discussed in Section
\ref{sec:def-class}.

\begin{defin}
	Let $\mathcal{H}_{\text{PV}}$ denote the class of homomorphisms defined by
	$$\set{ P_i\ilor \slnot P_i | i\in \mathbb{N}}.$$ We say that a
	homomorphism $H$ \textit{has principal  variables} iff
	$H\in\mathcal{H}_{\text{PV}}$. Furthermore, let \emph{the principal variable
	axioms} be the set
	\[
	\text{PVA}=\set{\Box ( P_i \ilor \slnot P_i) |i\in	\mathbb{N}}.
	\]
\end{defin}
 
 It follows directly from Theorem \ref{Def_to_ax} that, for all $\Delta,\set
 {\phi}\subseteq \Fm$,
 \[
 	 \Delta \vDash_{\mathcal{H}_{\text{PV}}} \phi \quad \text{if and only if} \quad \text{PVA},	\Delta\vDash \phi.
\]
It is evident that $H_V\in \mathcal{H}_{\text{PV}}$. 

\subsection{Axiomatisation of $\text{PT}^+$}

In this section we prove that the axioms PVA axiomatise $\text{PT}^+$ in the
sense of the following theorem.
\begin{thm}\label{thm:mainAx}
	For all $\Delta \cup \{ \phi \} \subseteq \Fm_{\mathrm{PT}^+}$ 
	\[ 
	\Delta \vDash_{\mathrm{PT}^+} \phi \quad \text{iff} \quad \mathrm{PVA}, \Delta \vDash \phi.
	\]
\end{thm}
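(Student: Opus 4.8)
The plan is to recast both sides of the claimed equivalence using results already established. By Theorem~\ref{directThm}, the left-hand side $\mathrm{PT}^+:\Delta\vDash\phi$ is equivalent to the local entailment $H_V:\Delta\vDash\phi$ at the single valuation homomorphism. For the right-hand side, Theorem~\ref{Def_to_ax} together with the definition of $\mathcal H_{\mathrm{PV}}$ and the displayed remark that $\mathrm{LT}:\mathrm{PVA},\Delta\vDash\phi$ iff $\mathcal H_{\mathrm{PV}}:\Delta\vDash\phi$ shows it is equivalent to $\mathcal H_{\mathrm{PV}}:\Delta\vDash\phi$. So it suffices to prove, for $\Delta\cup\{\phi\}\subseteq\Fm_{\mathrm{PT}^+}$, that
\[
H_V:\Delta\vDash\phi \quad\Longleftrightarrow\quad \mathcal H_{\mathrm{PV}}:\Delta\vDash\phi.
\]

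First I would dispatch the direction from right to left, which is immediate: since $H_V\in\mathcal H_{\mathrm{PV}}$, entailment over the whole class specialises to entailment at the single member $H_V$. This recovers the relevant instance of the last implication of Theorem~\ref{directThm}.

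For the converse I would argue contrapositively, transporting an arbitrary counterexample in the class down to one at $H_V$ by applying the two lemmas in sequence. Suppose $\mathcal H_{\mathrm{PV}}:\Delta\nvDash\phi$, witnessed by some $H\in\mathcal H_{\mathrm{PV}}$ with codomain $\P B$ and some $x\in B$ with $x\in\bigcap_{\delta\in\Delta}H(\delta)$ and $x\notin H(\phi)$. The first step is to replace $B$ by a complete atomic Boolean algebra: by Stone's representation theorem $B$ embeds into a powerset algebra $\P S$, and Lemma~\ref{embedding_lemma} produces a homomorphism $H'\in\mathcal H_{\mathrm{PV}}$ into $\P(\P S)$ that preserves membership in the denotations of all $\mathrm{PT}^+$-formulas along the embedding; the counterexample therefore transports to $H'$, giving $H':\Delta\nvDash\phi$ with a complete atomic codomain. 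The second step is to represent $H'$ by the valuation homomorphism $H_V:\Fm\to\P 2^{\mathbb N}$ via Lemma~\ref{f_rep_in_Val}, faithfully transferring membership for every $\mathrm{PT}^+$-formula; this yields $H_V:\Delta\nvDash\phi$. Contraposing gives $H_V:\Delta\vDash\phi\Rightarrow\mathcal H_{\mathrm{PV}}:\Delta\vDash\phi$, which completes the equivalence and hence the theorem.

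The hard part will be entirely contained in the two lemmas, and in particular in verifying that membership in formula denotations survives both the Stone embedding (Lemma~\ref{embedding_lemma}) and the representation by $H_V$ (Lemma~\ref{f_rep_in_Val}). The delicate point is that such preservation can only be expected on the restricted language $\Fm_{\mathrm{PT}^+}$: the defining constraint $P_i\ilor\slnot P_i$ forces each atom to denote a principal ideal by Theorem~\ref{excluded_middle}, and it is precisely this principal-ideal structure --- downward closure together with a largest element --- that I expect to carry through embeddings and to be faithfully coded by teams of valuations. Each connective of $\mathrm{PT}^+$, namely the external $\land,\lor$, the internal $\ilor$, the constants $\ibot,\NB$, and the strict negation $\slnot$ applied to atoms, would need to be checked individually to commute with the embedding and with the representation; the argument is not expected to extend to full LT, where atoms may denote arbitrary subsets rather than principal ideals.
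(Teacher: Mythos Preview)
Your proposal is correct and follows essentially the same route as the paper: reduce the biconditional to $H_V:\Delta\vDash\phi \iff \mathcal H_{\mathrm{PV}}:\Delta\vDash\phi$, get one direction from $H_V\in\mathcal H_{\mathrm{PV}}$, and for the other direction argue contrapositively by pushing a counterexample through Stone's embedding (Lemma~\ref{embedding_lemma}) and then through the representation by $H_V$ (Lemma~\ref{f_rep_in_Val}). Your closing remarks about why the argument is confined to $\Fm_{\mathrm{PT}^+}$ and hinges on the principal-ideal structure of atoms are also in line with the paper's development.
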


The right-to-left direction follows directly from Theorem \ref{Def_to_ax} and
\ref{directThm}. To prove the other direction we first recall the fundamental
 result by Stone in the theory of Boolean algebras, see for example \cite
 [Chapter 22]{Halmos2009}.

\begin{thm}[Stone representation theorem]
\label{thm:completeBA}
	Every Boolean algebra can be embedded into a complete atomic Boolean
	algebra of the form $(\P S, \emptyset, \cdot^C, \cup, \cap)$ for some set
	$S$.
\end{thm}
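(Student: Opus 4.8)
The plan is to prove this as an instance of Stone's representation theorem, realizing $B$ as a field of subsets of its space of ultrafilters. First I would let $S$ be the set of all ultrafilters on $B$, equivalently the set of Boolean homomorphisms $u : B \to 2$ into the two-element algebra, and define the candidate map $\eta : B \to \P S$ by
\[
\eta(a) = \set{u \in S | a \in u}.
\]
Here $\P S$ carries its set-theoretic Boolean structure $(\emptyset, \cdot^C, \cup, \cap)$, which is manifestly complete, since arbitrary unions and intersections of subsets of $S$ exist, and atomic, since its atoms are exactly the singletons $\set{u}$. Thus the codomain already has the required form, and it remains only to verify that $\eta$ is an injective homomorphism.

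The first block of work is to check that $\eta$ is a Boolean homomorphism, and this is routine from the defining closure properties of ultrafilters. Since $\bot$ belongs to no proper filter we get $\eta(\bot) = \emptyset$; closure under meets together with upward closure gives $\eta(a \land b) = \eta(a) \cap \eta(b)$; the primeness of ultrafilters ($a \lor b \in u$ iff $a \in u$ or $b \in u$) gives $\eta(a \lor b) = \eta(a) \cup \eta(b)$; and the maximality condition, that exactly one of $a$ and $\lnot a$ lies in $u$, gives $\eta(\lnot a) = \eta(a)^C$. None of these steps is delicate. For injectivity I would use the standard fact that a Boolean homomorphism is injective as soon as its kernel is trivial: if $\eta(a) = \eta(b)$ then, writing $a \bigtriangleup b = (a \land \lnot b) \lor (\lnot a \land b)$, we have $\eta(a \bigtriangleup b) = \eta(a) \bigtriangleup \eta(b) = \emptyset$, so it suffices to show that $\eta(c) = \emptyset$ forces $c = \bot$.

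The substantive step, and the one genuine obstacle, is precisely this separation property: I must guarantee that there are \emph{enough} ultrafilters. If $c \neq \bot$, the principal filter $\set{b \in B | b \geq c}$ is proper, and by the ultrafilter lemma (equivalently the Boolean prime ideal theorem) it extends to an ultrafilter $u$ with $c \in u$, whence $u \in \eta(c)$ and $\eta(c) \neq \emptyset$. This appeal to the ultrafilter lemma, a weak form of the axiom of choice, is the essential non-elementary ingredient; the rest of the argument is purely computational. I would finally remark that the construction behaves correctly on the degenerate case: when $B$ is the trivial one-element algebra there are no proper filters, so $S = \emptyset$ and $\P S = \set{\emptyset}$ is again the trivial algebra, consistent with the conventions adopted earlier in the paper.
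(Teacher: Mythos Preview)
Your proof is correct and is the standard Stone representation argument; however, the paper does not actually prove this theorem. It is stated with a citation to Halmos and used as a black box in the proof of Theorem~\ref{thm:mainAx}, so there is no ``paper's own proof'' to compare against. What you have written is exactly the argument one finds in the cited reference.
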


Using this theorem we can establish Theorem \ref{thm:mainAx} in a two step
process. First showing that any homomorphism $H\in \mathcal{H}_{PV}$ can be
faithfully represented by a homomorphism in a Boolean algebra of subsets, and
then that any such homomorphism can be represented by the specific valuational
homomorphism $H_V: \Fm  \to \P \P 2^{\mathbb{N}}$. These steps are established
below in Lemma \ref{embedding_lemma} and Lemma \ref{f_rep_in_Val}
respectively. Both proofs are similar in structure, where the statement is
proven by induction over the complexity of formulas in $\mathrm{PT}^+$ for
which particular care is needed to handle the internal connective $\ilor$.

First we give a definition of an interval in a Boolean algebra. 

\begin{defin} 
		In a Boolean algebra $B$, if $a,b\in B$ and $a\leq b$, then let
		$[a,b]\subseteq B$ denote the set of elements between $a$ and $b$,
		that is $$[a,b]= \set{c\in B | a\leq c \text{ and } c\leq b  }. $$ We
		call this the closed interval of $a$ and $b$, and if $a=b$ we may
		write $[a]$ instead of $[a,a]$.
	\end{defin}  

\begin{lemma}\label{embedding_lemma}
	Let $B,B'$ be Boolean algebras, and $e:B\hookrightarrow B'$ an embedding
	of Boolean algebras. Then for each homomorphism $H:
	\Fm \to \P B $ with principal variables there  is a
	homomorphism $H': \Fm \to \P B'$  with principal variables
	such that for all formulas $\phi \in \Fm_{\text{PT}^+}$ $$ b\in H(\phi)
	\quad \text{ if and only if } \quad e(b) \in H'(\phi).$$
\end{lemma}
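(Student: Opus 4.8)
The plan is to define the companion homomorphism $H'$ from the principal-ideal data of $H$ and then verify the displayed equivalence by induction on the structure of $\phi \in \Fm_{\text{PT}^+}$. Since $H$ has principal variables, each $H(P_i)$ is a principal ideal; writing $a_i$ for its greatest element, so that $H(P_i) = \set{b \in B | b \le a_i}$, I would take $H' \colon \Fm_{\text{LT}} \to \P B'$ to be the unique homomorphism with $H'(P_i) = \set{b' \in B' | b' \le e(a_i)}$. This is again a principal ideal, so $H'$ has principal variables, as required. The whole induction rests on three elementary facts about a Boolean embedding $e$: it preserves $\bot,\lnot,\lor,\land$; it is injective; and, being an injective homomorphism, it reflects order, i.e.\ $b \le c$ iff $e(b) \le e(c)$.

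The base cases and the external connectives are routine. For $P_i$ the equivalence $b \le a_i \iff e(b) \le e(a_i)$ is order-reflection; for $\slnot P_i$ I would rewrite $H(\slnot P_i) = \slnot H(P_i)$ and $H'(\slnot P_i) = \slnot H'(P_i)$ by Theorem \ref{slnot_form_to_subset} and use Lemma \ref{negated_principal} to identify these as the principal ideals generated by $\lnot a_i$ and $e(\lnot a_i) = \lnot e(a_i)$, again reducing to order-reflection. The constant $\ibot$ reduces to injectivity and $\NB$ to its contrapositive, while $\psi \land \sigma$ and $\psi \lor \sigma$ pass through $e$ componentwise since these connectives are interpreted as intersection and union. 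The single genuine difficulty is the internal disjunction $\psi \ilor \sigma$. One direction is immediate: if $b = c \lor d$ with $c \in H(\psi)$, $d \in H(\sigma)$, then $e(b) = e(c) \lor e(d)$, and the induction hypothesis puts $e(c) \in H'(\psi)$, $e(d) \in H'(\sigma)$, so $e(b) \in H'(\psi \ilor \sigma)$.

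The converse is the crux, and where I expect the real work to lie: from a splitting $e(b) = c' \lor d'$ in $B'$ with $c' \in H'(\psi)$ and $d' \in H'(\sigma)$, I must produce a splitting $b = c \lor d$ in $B$ with $c \in H(\psi)$ and $d \in H(\sigma)$, even though $c'$ and $d'$ need not lie in the image of $e$. I would isolate this in a separate \emph{rounding} sub-lemma, proved by its own induction over $\Fm_{\text{PT}^+}$: for every $\phi$, every $b \in B$, and every $x' \in H'(\phi)$ with $x' \le e(b)$, there is $x \in H(\phi)$ with $x \le b$ and $x' \le e(x)$. Granting it, the converse is immediate --- rounding $c'$ to $c$ and $d'$ to $d$ gives $e(b) = c' \lor d' \le e(c) \lor e(d) = e(c \lor d) \le e(b)$, so $e(c \lor d) = e(b)$ and hence $c \lor d = b$ by injectivity. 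The delicate point inside the sub-lemma, and the main obstacle overall, is that $\text{PT}^+$-interpretations need not be downward closed: at $P_i$ one simply takes the ceiling $x = b \land a_i$, but the constant $\NB$ imposes an upward (nonemptiness) condition. Here the inequality $x' \le e(x)$ carries the weight, since a nonzero witness $x'$ forces $x$ nonzero for free; threading this through the conjunction case --- where the two rounded witnesses must be combined, e.g.\ by their meet, without leaving either interpretation --- is what requires care, and I would handle it by carrying along an auxiliary downward-stability statement in the same induction, whose only subtle instance, the constant $\NB$, is controlled by the base-case equivalence at $\bot$.
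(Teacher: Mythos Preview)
Your overall plan---defining $H'$ via the generators $e(a_i)$, isolating the converse of the $\ilor$ case as the only real obstacle, and proving a separate sub-lemma to ``round'' a witness in $B'$ down to one in the image of $e$---is exactly the paper's strategy. The difference lies in how the sub-lemma is formulated, and that difference is not cosmetic.

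The paper does \emph{not} round into $H(\phi)$. Its \emph{interval lemma} stays entirely on the $B'$ side: for every $b'\in H'(\phi)$ there is $b\in B$ with $[b',e(b)]\subseteq H'(\phi)$. The point is that this statement goes through $\land$ effortlessly, since $[b',e(b_\psi\land b_\chi)]=[b',e(b_\psi)]\cap[b',e(b_\chi)]$. Only \emph{afterwards}, in the main induction, does one use the interval to manufacture elements of $H(\psi)$ and $H(\chi)$: given $e(b)=c'\lor d'$, set $c=b\land b_\psi$ and $d=b\land b_\chi$; then $e(c)\in[c',e(b_\psi)]\subseteq H'(\psi)$, whence $c\in H(\psi)$ by the inductive hypothesis for $\psi$, and one checks $c\lor d=b$ using $b\le b_\psi\lor b_\chi$.

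Your rounding lemma, by contrast, demands $x\in H(\phi)$ directly, and this is where your sketch has a genuine gap. At $\phi=\psi\land\sigma$ you obtain $x_1\in H(\psi)$ and $x_2\in H(\sigma)$ with $x'\le e(x_1),e(x_2)$, and propose $x=x_1\land x_2$. But $H(\psi)$ need not be downward closed, so there is no reason $x_1\land x_2\in H(\psi)$. Your suggestion that ``only $\NB$ is subtle'' for a downward-stability auxiliary is not right: $\NB$ is the \emph{source}, but $\ilor$ \emph{propagates} the failure. For instance $H\bigl((P_1\land\NB)\ilor(P_2\land\NB)\bigr)$ is not downward closed even relative to any fixed nonzero lower bound, and a putative stability statement of the form ``if $x\in H(\phi)$, $x'\in H'(\phi)$ and $x'\le e(y)\le e(x)$ then $y\in H(\phi)$'' runs into trouble at $\ilor$ because the splittings of $x$ and of $x'$ are unrelated. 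Iterating the rounding (bounding by $x_1$, then by $x_2$, \dots) gives a descending chain with no termination guarantee in an arbitrary Boolean algebra.

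The fix is precisely the paper's move: strengthen the sub-lemma to the interval form $[x',e(x)]\subseteq H'(\phi)$ (optionally with the bound $x\le b$, which the paper does not need), so that meets of the ceilings handle $\land$ for free; then recover $x\in H(\phi)$ only at the end, via the main equivalence for the subformulas. With that reformulation your argument goes through and coincides with the paper's.
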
 
\begin{proof}
	Assume $H: \Fm \to \P B$ has principal variables. Thus, for all $i$ there
	exists $b_i\in B$ such that $$H(P_i)= \set{ a\in B | a\leq b_i  }.$$ We
	then define $H': \Fm \to \P B'$ as the homomorphism that maps each
	propositional variable $P_i$ to the principal ideal of $e(b_i)$, i.e.,
	$$H'(P_i)= \set{ a'\in B'| a'\leq e(b_i)}.$$
	
	Clearly, $H'$ has principal variables, so what is left to show is that the
	equivalence in the theorem holds for all $b\in B$ and all formulas $\phi$
	of PT$^+$. In order to do so, we need an additional lemma best described
	using the notation of Boolean intervals.\footnote{The introduction of
	Boolean intervals in this context is inspired by their usage in
	\cite{Hella2023} in which minimal covers of Boolean intervals are used to
	define complexity measures for expressions in first-order team semantics.
	Even if our usage is noticeably different, the proof construction we
	present for forming included covering intervals is strongly related to the
	constructive methods of generating minimal interval covers for collections
	of teams satisfying a formula presented in that paper.}

\begin{claim} 
Let $B$ and $B'$ be Boolean algebras, let $e:B\hookrightarrow B'$ be a Boolean
algebra embedding, let $H:\Fm\to \P B$ be a homomorphism, and let
$H':\Fm\to \P B'$ be the homomorphism generated from $H$ as described above.
Then, for all $\phi\in \Fm_{\mathrm{PT}^+}$ and all $b'\in B'$, if $b'\in H'
(\phi)$, then there is some $b\in B$ such that $b'\leq e(b)$ and $[b',e
(b)]\subseteq H'(\phi)$.
\end{claim}

\begin{proof}[Proof of the claim]
	We prove this statement for all $b\in B$ by induction over the complexity
	of formulas. Let $b_i$ denote the generating element of the principal
	ideal $H(P_i)$.
	\begin{itemize}
		\item Assume $\phi = \ibot$. Then $b'\in H'(\ibot)$ only if $b'=  \bbot
		= e(\bbot)$, and thus $[b', e(\bbot)]= [\bbot] \subseteq H'(\ibot)$.
		\item Assume $\phi = \NB $. Then if $b'\in H'(\NB)$ clearly
		$[b',e(\btop)]\subseteq H'(\NB)$.
		\item Assume $\phi = P_i$. If $b'\in H'(P_i)$, then $b'\leq e(b_i)$
		and, since $H'(P_i)$ is a principal ideal, clearly
		$[b',e(b_i)]\subseteq  H'(P_i)$.
		\item Assume $\phi = \slnot P_i $. By Lemma \ref{negated_principal}
		and a similar argument as for the previous case we can conclude that
		if $b'\in H'(\slnot P_i)$ then $[b', e(\bnot b_i)] \subseteq H'(\slnot
		P_i)$.
		\item Assume $\phi = \psi \eand \chi$. If $b'\in H'(\phi)$, then
		$b'\in H'(\psi)$ and $b'\in H'(\chi)$. By induction, we may find
		$b_\psi,b_\chi\in B$ such that $[b',e(b_\psi)]\subseteq H'(\psi)$ and
		$[b',e(b_\chi)]\subseteq H'(\chi)$. Since $e$  is a
		homomorphism $$[b',e(b_\psi \band b_\chi)]\subseteq H'(\phi).$$
		\item $\phi = \psi \eor \chi$. If $b' \in H'(\phi)$, then $b'\in
		H'(\psi)$ or $b'\in H'(\chi)$, and without loss of generality we may
		assume the former. By induction hypothesis there is some $b_\psi \in
		B$ such that $[b',e(b_\psi)]\subseteq H'(\psi)$, and thus
		$[b',e(b_\psi)]\subseteq H'(\phi)$.
		\item $\phi = \psi \ilor \chi$. If $b'\in H'(\phi)$, then there is $
		c'\bor d'= b'$ such that $c'\in H'(\psi)$ and $d'\in H'(\chi)$. By
		induction hypothesis, we can then find $b_\psi, b_\chi \in B$ such
		that $[c',e(b_\psi)]\subseteq H'(\psi)$ and $[d',e(b_\chi)]\subseteq
		H'(\chi)$. Then since $b'= c'\bor d'$ and $e$ is a homomorphism, it is
		easy to see that $$H'(\phi)\supseteq \set{ p\bor q | p\in
		[c',e(b_\psi)], q \in [d', e(b_\chi)]} = [b', e(b_\psi \bor
		b_\chi)].$$
	\end{itemize}
	This concludes the proof of the claim.
\end{proof}

Now we are ready to finish the proof of the lemma. Again
this is achieved for all $b\in B$ by induction over the complexity of
formulas.
	\begin{itemize}
	\item Assume $\phi = \ibot$. First note that $b\in H(\ibot)$ if and only
	if $ b= \bbot\in B$ and  $b'\in H'(\ibot)$ if and only if $b'= \bbot \in
	B'$. Then note that $e(\bbot)= \bbot\in B'$ by $e$ being a homomorphism, and
	by $e$ being injective $e(b)= \bbot$ if and only if $b= \bbot\in B$. These
	observations suffice to prove the statement.
	\item Assume $\phi = \NB$. Since regardless of homomorphism and algebra
	$H(\NB)$ is the complement set of $H(\ibot)$, a similar argument proves
	the statement.
	\item Assume $\phi = P_i$. For all $b\in B$ we have that  $b\in H(\P_i)$
	if and only if $b\leq b_i$. Then by $e$ being an embedding this holds if
	and only if  $e(b)\leq e(b_i)$, which is equivalent to $e(b)\in H'(P_i)$.
	\item Assume $\phi = \slnot P_i$. By Lemma \ref{negated_principal}
	$H(\slnot P_i)$ is the principal ideal generated by $\bnot b_i$. Similarly,
	$H'(\slnot P_i) $  is the principal ideal generated by $\bnot e(b_i)$.
	Thus, a similar argument as for $\phi = P_i$ suffices.
	\item Assume $\phi = \psi \eand \chi$. Then $b\in H(\phi)$ if and only if
	$b\in H(\psi)$ and $b\in H(\chi)$. By induction we can conclude that this
	holds if and only if  $e(b)\in H'(\psi)$ and $e(b)\in H'(\chi)$ which
	holds if and only if  $e(b)\in H'(\phi)$.
	\item Assume $\phi = \psi \eor \chi$. The result follows in a similar 
	way as the previous case.
	\item Assume $\phi = \psi \ilor \chi$. If $b\in H(\phi)$, then there
	exists $c\bor d = b$ such that $c\in H(\psi)$ and $d\in H(\chi)$. By
	induction hypothesis we directly see that $e(b)= e(c)\bor e(d) \in
	H'(\phi)$. For the other direction, assume $e(b)\in H'(\phi)$. Then there
	exists $c'\bor d' = e(b)$ such that $c'\in H'(\psi) $ and $d'\in
	H'(\chi)$. Now by the interval lemma above we can find $b_\psi, b_\chi \in
	B$ such that $[c',e(b_\psi)]\subseteq H'(\psi)$ and
	$[d',e(b_\chi)]\subseteq H'(\chi)$. Now, let $c= b\band b_\psi$ then
	$$e(c)= e(b)\band e(b_\psi)= (c'\bor d')\band e(b_\psi)= c'\bor  (d' \band
	e(b_\psi))$$ so that clearly $e(c)\in [c', e(b_\psi)]\subseteq H'(\psi)$,
	and by induction hypothesis $c\in H(\psi)$. Similarly define $d= b\band
	b_\chi$ and conclude that $d\in H(\chi)$. 

	What is left to show is that $b=c\bor d$. Since
	$c'\leq e(b_\psi)$ and $d'\leq e(b_\chi)$, we have
	\[
		e(b)=c'\bor d'
		\leq e(b_\psi)\bor e(b_\chi)
		= e(b_\psi\bor b_\chi).
	\]
	As $e$ is an embedding, it reflects the order, and hence
	$b\leq b_\psi\bor b_\chi$. Therefore,
	\[
		c\bor d
		=
		(b\band b_\psi)\bor(b\band b_\chi)
		=
		b\band(b_\psi\bor b_\chi)
		=
		b.
	\]
	Thus $b=c\bor d$ with $c\in H(\psi)$ and $d\in H(\chi)$, so
	$b\in H(\psi\ilor\chi)=H(\phi)$.
\end{itemize} 
This concludes the proof of the lemma.
\end{proof}

\begin{lemma}\label{f_rep_in_Val}
For every Boolean algebra of the form $(\P S, \emptyset, \cdot^C, \cup ,\cap)$
and every homomorphism $H:\Fm \to \P \P S$ with principal
variables there is a mapping $f_H:S \to 2^{\mathbb{N}}$  such that for all
formulas $\phi \in \Fm_{\text{PT}^+}$ and all $X\in \P S$:
\[ 
X\in H(\phi) \quad \text{iff} \quad f_H^*(X) \in H_V(\phi),
\] 
where $f_H^*:\P S \to \P 2^{\mathbb{N}}$ is defined by $f_H^*(X) = \set{f_H(x) | x
\in X}$, and $H_V:\Fm \to \P\P 2^\mathbb{N}$ denotes the valuation
homomorphism.
\end{lemma}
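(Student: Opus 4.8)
The plan is to build $f_H$ directly from the principal generators of the variables and then verify the biconditional by induction on $\phi \in \Fm_{\text{PT}^+}$. Since $H$ has principal variables and the underlying Boolean algebra is $\P S$, each $H(P_i)$ is a principal ideal of $\P S$ and hence of the form $\set{V \subseteq S | V \subseteq U_i}$ for the unique generator $U_i = \bigcup H(P_i) \subseteq S$. I would then set $f_H(x)(i) = 1$ if and only if $x \in U_i$. Before the induction it is worth recording three bookkeeping facts about the induced map $f_H^*$: it commutes with unions, $f_H^*(A \cup B) = f_H^*(A) \cup f_H^*(B)$; it sends $\emptyset$ to $\emptyset$; and $f_H^*(X) = \emptyset$ forces $X = \emptyset$. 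Throughout I use that on $\Fm_{\text{PT}^+}$ the homomorphism $H_V$ is computed by the valuational clauses (the content of Theorem~\ref{directThm}), together with Theorem~\ref{slnot_form_to_subset} and Lemma~\ref{negated_principal} for the strictly negated variables.

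For the base cases, the clause for $P_i$ unwinds on both sides to ``for all $x \in X$, $f_H(x)(i) = 1$'': on the $H$-side because $X \in H(P_i)$ iff $X \subseteq U_i$, and on the $H_V$-side because $f_H^*(X) \in H_V(P_i)$ iff $f_H^*(X) \subseteq \set{s | s(i) = 1}$; these agree by the definition of $f_H$. For $\slnot P_i$, Lemma~\ref{negated_principal} gives $H(\slnot P_i) = \set{V \subseteq S | V \subseteq S \setminus U_i}$, and the identical computation with the clause $f_H(x)(i) = 0$ applies. The constants $\ibot$ and $\NB$ follow immediately from the emptiness facts above. For the external connectives $\land$ and $\lor$, both $H$ and $H_V$ interpret them as intersection and union of denotations, so membership of $X$ and of $f_H^*(X)$ transfer through the induction hypothesis with no extra work.

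The one delicate case is $\phi = \psi \ilor \chi$, where $H(\phi) = \set{c \cup d | c \in H(\psi), d \in H(\chi)}$ (as $\lor$ in $\P S$ is union) and $H_V(\phi) = \set{Y_1 \cup Y_2 | Y_1 \in H_V(\psi), Y_2 \in H_V(\chi)}$. The forward direction is immediate: a witness $X = c \cup d$ maps to $f_H^*(X) = f_H^*(c) \cup f_H^*(d)$, a legal splitting by the induction hypothesis. The backward direction is the crux, and where I expect the main obstacle, since $f_H$ need not be injective and distinct points of $S$ may collapse to the same valuation. Here I would isolate the \textbf{claim} that for any $Y \subseteq f_H^*(X)$ the preimage $c = \set{x \in X | f_H(x) \in Y}$ satisfies $f_H^*(c) = Y$ exactly: the inclusion $\subseteq$ is clear, and $\supseteq$ uses $Y \subseteq f_H^*(X)$ to realise each $v \in Y$ as some $f_H(x)$ with $x \in X$. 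This claim is what guarantees that pulling a subteam back along $f_H$ returns a subset of $X$ whose image is precisely that subteam.

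Applying the claim completes the induction: given a splitting $f_H^*(X) = Y_1 \cup Y_2$ with $Y_1 \in H_V(\psi)$ and $Y_2 \in H_V(\chi)$, set $c = \set{x \in X | f_H(x) \in Y_1}$ and $d = \set{x \in X | f_H(x) \in Y_2}$. Since $Y_1, Y_2 \subseteq f_H^*(X)$, the claim yields $f_H^*(c) = Y_1$ and $f_H^*(d) = Y_2$, so $c \in H(\psi)$ and $d \in H(\chi)$ by the induction hypothesis. Moreover $c \cup d = X$, because every $x \in X$ has $f_H(x) \in Y_1 \cup Y_2$ and so lands in $c$ or in $d$. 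Hence $X = c \cup d \in H(\psi \ilor \chi)$, which closes the case and the induction.
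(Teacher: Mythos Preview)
Your proof is correct and follows essentially the same route as the paper: the map $f_H$ you define coincides with the paper's (since $\{x\}\in H(P_i)$ iff $x\in U_i$), and your isolated claim about pulling back subteams along $f_H$ is exactly the second half of the paper's ``Claim'' used to handle the backward direction of the $\ilor$ case, with the remaining inductive cases handled identically.
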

\begin{proof}
	Assume $H:\Fm \to \P \P S$ has principal variables. Let $f: S\to
	2^{\mathbb{N}}$ be defined by
	\[
		f(s)(i)= \begin{cases} 1 & \text{ if } \{s\} \in H(P_i) \\ 
								0 & \text{ if } \{s\} \notin  H(P_i).
					\end{cases}
	\]

	\begin{claim} 
	For all $X,Y\in \P S$, $f^*(X\cup Y) = f^*(X)\cup f^*(Y)$.
	Furthermore, if $f^*(X) = U\cup V$, then there exist $Y,Z\in \P S$ such
	that $f^*(Y)= U$, $f^*(Z)= V$ and $Y\cup Z= X$.
	\end{claim}
\begin{proof}[Proof of claim]
	The first part of the claim is self-evident by the definition of $f^*$
	from $f$. For the second part, let $Y= \set{s\in X| f(s)\in U}$ and $Z=
	\set{s\in X | f(s) \in V}$.  Then every element of $X$ belongs to $Y\cup Z$,
	since $f(s)\in f^*(X)=U\cup V$, and by construction we have
	$f^*(Y)=U$ and $f^*(Z)=V$.
\end{proof}	
	We can now prove that the function $f$ is what we looked for in the lemma
	by induction over formulas $\phi\in	\Fm_{\text{PT}^+}$.
	
	We have four types of base cases:  $\ibot, \NB, P_i, \slnot P_i $:
	\begin{itemize}
		\item Assume $\phi = \ibot$. By definition $X\in H(\ibot)$ iff $X= \emptyset$, and
		since $f^*(X) = \emptyset $ if and only if $X= \emptyset$ this case is evident.
		\item Assume $\phi = \NB$. This case is proved by contraposition of the previous
		case.
		\item Assume $\phi = P_i$. By assumption $H(P_i)$ is a principal ideal in the Boolean
		algebra $\P S$. Hence, for all $X \in H(P_i)$, by downward closure we have
		for all $s\in X$ that $\{s\} \in H(P_i)$. By construction then
		$f^*(\{s\}) \in H_V (P_i)$ for all $s\in X$, and thus since $H_V$ has
		principal variables we find that $f^*(X)\in
		H_V(P_i)$. The opposite direction is proved with a similar chain of
		arguments.
		\item Assume $\phi = \slnot P_i$. Then by Lemma
		\ref{negated_principal} we assert that for all $H\in \mathcal{H}_{PV}$
		we have that $H(\slnot P_i)$ is a principal ideal. The proof is then
		similar to the previous case.
	\end{itemize}

	For the induction step we have three cases for the main connectives:
	$\eand ,\eor,\ilor$
	\begin{itemize}
		\item Assume $\phi = \psi \eand \chi$. $X\in H(\phi)$ by definition if
		and only if $X\in H(\psi)$ and $X\in H(\chi)$. By induction
		hypothesis, we can conclude that is the case if and only if $f^*(X)\in
		H_V(\psi)$ and $f^*(X)\in H_V(\chi)$, which is equivalent to stating
		that $f^*(X) \in H_V(\phi)$.
		\item Assume $\phi = \psi \eor \chi$. The proof is similar to the previous case.
		\item Assume  $\phi = \psi \ilor \chi$. For one direction, assume $X \in
		H(\phi)$. Then there exist $Y\in H(\psi)$ and $Z\in H(\chi)$ such
		that  $X= Y\cup Z$. By induction hypothesis $f^*(Y)\in H_V(\psi)$ and
		$f^*(Z)\in H_V(\chi)$. By the first part of the Claim about $f^*$ we
		see that $f^*(X)= f^*(Y)\cup f^*(Z)$ and thus $f^*(X)\in H_V(\phi)$.
		
		For the other direction, assume $f^*(X) \in H_V(\phi)$.  Then there
		exist $U\in H_V(\psi), V\in H_V(\chi)$ such that $f^*(X)=U\cup V$.
		Then by the second part of the Claim about $f^*$, there exists $Z, Y$
		such that $f^*(Z)=U, f^*(Y)= V$ and $Z\cup Y= X$. By the induction
		hypothesis  $Z\in H(\psi)$ and $Y\in H(\chi)$.  We conclude
		that $X\in H(\phi)$.
	\end{itemize}
	This concludes the proof of the lemma.
\end{proof}

We are now ready to prove Theorem \ref{thm:mainAx}.

\begin{proof}[Proof of Theorem \ref{thm:mainAx}] 
	By applying Theorem \ref{Def_to_ax} the statement of Theorem \ref{thm:mainAx} is equivalent to the statement that for all $\Delta \cup \{
	\phi \} \subseteq \Fm_{\text{PT}^+}$ 
	\[
	\Delta \vDash_{\text{PT}^+} \phi \quad \text{iff} \quad
	\Delta \vDash_{\mathcal{H}_{\text{PV}}} \phi.
	\]
	The right-to-left direction follows directly from Proposition \ref{directThm} and the fact
	that $H_V\in\mathcal{H}_{\text{PV}} $. 

	The other direction is proved by contraposition. Assume
	$\mathcal{H}_{\text{PV}} :  \Delta \nvDash \phi$ and thus  $\text{LTP} :
	\text{PVA}, \Delta \nvDash \phi$.  Then there is some Boolean algebra $B$ and 
	homomorphism $H: \Fm \to  \P B$ such that $ H:
	\text{PVA},\Delta \nvDash \phi.$ In other words, $H\in
	\mathcal{H}_{\text{PV}}$ and there exists an element $ b\in B $ such that
	\[
	b \in \bigcap_{\delta\in \Delta} H(\delta), \text{ but } b \notin H(\phi).
	\]  
	By Theorem \ref{thm:completeBA}, we can then find an embedding $e :B \hookrightarrow \P{S}$ of $B$ into a complete atomic Boolean algebra of the form $(\P S, \emptyset, \cdot^C, \cup
	,\cap)$. Then by Lemma \ref{embedding_lemma} and \ref{f_rep_in_Val}  we can find a homomorphism $H':\Fm\to \P\P S$, and a mapping $f_{H'}: S\to 2^\mathbb{N}$  such that 
	\[
	f_{H'}^*(e(b)) \in \bigcap_{\delta\in\Delta} H_V(\delta), \text{ but }
	f_{H'}^*(e(b))\notin H_V(\phi).
	\]
	Thus, $H_V :\Delta \nvDash \phi$ and by Theorem \ref{directThm}: 
	\[
	\Delta \nvDash_{\text{PT}^+} \phi.
	\] 
	This finalises the proof of Theorem	\ref{thm:mainAx}.
\end{proof}

Observe that to evaluate $\text{PT}^+$ it is sufficient to consider the
valuational algebra $\P 2^\mathbb{N}$, i.e., for all  $\Delta, \{\phi\}
\subseteq\Fm_{\text{PT}^+}$ we have 
\[
\Delta \vDash_{\text{PT}^+} \phi \quad\text{iff}\quad 
\text{PVA},\Delta \vDash_{\P 2^\mathbb{N}} \phi.
\]
In this sense $\P 2^\mathbb{N}$ can be seen as canonical for $\text{PT}^+$. It
is, however, \textit{not} canonical for the logic in LTP axiomatised by PVA,
since the canonicity only holds when the formulas are restricted to the
language $\Fm_{\text{PT}^+}$.

We have proven the main correspondence result for the strong propositional
team logic $\mathrm{PT}^+$ by isolating, in the axiom set $\mathrm{PVA}$, the
crucial difference between valuational team semantics and the more general LTP
semantics. Consequently, these same axioms suffice to axiomatise all team
logics expressible in $\text{PT}^+$ and the following corollary is
immediate.

\begin{cor}
	For every logic $L$ listed in Table~\ref{list_of_logics} and all
	$\Delta \cup \{\phi\} \subseteq \Fm_L$,
	\[
		\Delta \vDash_L \phi
		\quad \text{iff} \quad
		\mathrm{PVA}, \Delta \vDash \phi.
	\]
	Here $\vDash_L$ denotes entailment in the logic $L$ with respect to
	standard team semantics. Consequently, all logics in Table \ref{ind_expr_logics} are indirectly expressible in LTP in the sense of \cite{Yang2017}.
\end{cor}

\subsection{Comparing to existing algebraisations}

After developing our encoding of valuational team semantics in LTP, we now
compare the resulting construction with the related approaches outlined in
Section \ref{subsub: others approaches}. All of these constructions present
semantics by specifying classes of algebras, but then externally
restricting the classes of homomorphisms the semantic validity considers. Our
way of identifying valuational team semantics through LTP is very similar,
with the key extra step that the restriction to the classes of homomorphisms
is itself axiomatisable in LTP. 

The set of axioms we use for this purpose (PVA) identifies the class of
homomorphisms that map atomic formulas to principal ideals of the underlying
Boolean algebra. In the overlapping cases,\footnote{Pun{\v{c}}och{\'a}{\v{r}}  
develops his systems extending all intermediate logics, whereas our
presentation focuses on logics formed from classical logic and Boolean
algebras} this identifies exactly the same class of homomorphisms as 
Pun{\v{c}}och{\'a}{\v{r}} restricts his semantics to \cite{Puncochar2017,Puncochar2021}. 
However, since we are essentially able to encode the full valuational team
semantics in LTP for the highly expressive logic $\text{PT}^+$, our result also
applies to logics that are not limited to downward-closed properties of
teams.

Bezhanishvili et al. \cite{Bezhanishvili2021} establish a similar set of
axioms for the specific properties of atomic formulas in their
axiomatisations of inquisitive logics.
However, where our axiom set is based on a form of excluded middle 
$(p\ilor \slnot p)$, their axiomatisation instead uses a double negation
elimination, which in our language corresponds to the formula 
$(\slnot \slnot p\to p)$). 
As these sets of formulas often are equivalent, one might reasonably expect
axioms of the form $\uBox(\slnot \slnot p\to p)$ to be equivalent to the
PVA axioms. Indeed, if we only look at homomorphisms into the valuational
model $\P 2^{\mathbb{N}}$ they are equivalent \cite{LorimerOlsson2022}.
However, this equivalence holds only for complete Boolean algebras, and
therefore does not hold in general. To show this, we first need to understand
the algebraic properties of $\slnot$ as an operation on power algebras.

\begin{lemma}\label{lem: strict not props} For all Boolean algebras $B$ and all $A\in \P B$:
	\begin{enumerate}
		\item $b \in \slnot A$ if and only if $\bnot b$ is an upper bound of $A$.
		\item $b\in \slnot  A$ if and only if $\phantom{\bnot}b$ is a\phantom{n} lower bound of $A^{\bnot}= \set{\bnot a |a\in  A}$.
		\item	$ \slnot \slnot A$ is the set of lower bounds of the upper bounds of $A$, that is 
		$$ \slnot \slnot A = \set{b | b\leq c \text{ for all }  c  \text{ such that } a\leq c \text { for all } a\in A}, $$ 
		\item $\slnot \slnot  A$ is a principal ideal if and only if $\bigvee A$ exists in  $B$.
	\end{enumerate}	
\end{lemma}
\begin{proof}~
	\begin{enumerate}
		\item 
		First assume $b\in \slnot A$. Then $b\band a = \bbot$ for all $a\in A$, and thus
		$$a= a\band (b\bor\bnot b)=  \bbot \bor (a\band \bnot b)= a\band \bnot b$$
		This holds for all $a \in A$ and thus $\bnot b$ is an upper bound of $A$.
		
		For the other direction, assume $\bnot b$ is an upper bound of $A$. Then for all $a\in A$ 
		$$a =a \band ( b \bor \bnot b) = (a\band b)  \bor (a \band \bnot b)= (a \band b) \bor a,$$ 
		proving that $a\band  b \leq  a$. Since $-b$ is an upper bound for all $a$ we also conclude $$(a\band b) \leq (a\band b)\band - b= \bbot.$$  Thus, for all $a\in A$ we have that  $  a\band b = \bbot$ proving that $ b \in \slnot A$.
		\item 
		Observe that if $a\leq b$ for all $a\in A$, then $\bnot b \leq \bnot a$ for all $a\in A$, proving that $b$ is a lower bound for $A^\bnot$ exactly when $\bnot b$ is an upper bound for $A$.
		\item Follows directly from (1) and (2):
		$$\slnot \slnot A= \set{b| b\leq c\text{ for all } c \in \set{ \bnot d | d\in \slnot A} }$$ 
		$$= \set{b| b\leq c\text{ for all } c \in \set{ \bnot d | a\leq \bnot d \text { for all } a\in A} }$$ 
		$$= \set{b | b\leq c \text{ for all }  c \text{ such that } a\leq c \text { for all } a\in A}.$$
		\item
			First assert that if a least upper bound of a set $A$ exists it is equivalent to the least upper bound of the lower bounds of the upper bounds of the set. By (3) we can thus conclude that $\bigvee A= \bigvee \slnot \slnot A$ and that they exist simultaneously. 
			Assume $\dot{a} =  \bigvee A$ exists in  $B$. Then $\dot{a}$ is a lower bound of all upper bounds of $A$, and by (3) $\dot{a}\in \slnot \slnot A$ and $\slnot \slnot A= \set{ b | b\leq \dot{a}}$ is a principal ideal. On the other hand, assuming $\slnot \slnot A$ is a principal ideal, then there is some $c\in \slnot \slnot A$ such that $d\leq c$ for all $d\in \slnot\slnot A$. Clearly then $c= \bigvee \slnot \slnot A$ and thus $\bigvee A$ exists in $B$.
	\end{enumerate}
\end{proof}

\begin{prop}\label{negation elim} 
	For all Boolean algebras $B$, all homomorphisms $ H: \Fm \to \P B$, and all
	formulas $\phi \in \Fm$ we have $$ \vDash_H \slnot \slnot \phi \to \phi
	\quad \text{iff}\quad \slnot \slnot H(\phi)= H(\phi). $$
\end{prop}

\begin{proof} 
By Proposition \ref{prop_strict_neg} (1) and the substitutionality of LTP, we
have that $\vDash \phi\to \slnot \slnot \phi$ and it is clear that the
proposition follows from how $\to$ and $\slnot$ are defined.
\end{proof}
\begin{cor}
	For all atomic formulas  $P$, we have in LTP that $$\uBox
	(\slnot \slnot P \to P)\nvDash \uBox (P\ilor \slnot P ).$$ 
\end{cor}
\begin{proof}
	By Lemma \ref{lem: strict not props} (4) together with Theorem \ref
	{excluded_middle} and Proposition \ref{negation elim} it is clear that
	any non-complete Boolean algebra serves as a counterexample to the
	entailment. 
\end{proof}
\section{Conclusion}\label{section:Conclusion}

In this paper, we have introduced a new substitutional logic of team properties, LTP,
with a natural semantics inspired by algebraic semantics together with a
sound and complete labelled natural deduction system. Additionally, we
presented an axiomatisation of the propositional dependence logic $\text
{PT}^+$ within the framework of LTP. 

By adopting an algebraic perspective from the outset, the development of the
semantics, the natural deduction system, and the relative axiomatisation
appears both straightforward and natural in that 
the resulting structures
closely reflect the intended semantic meaning of formulas rather than being shaped
by contingent choices at the level of encoding. 
This coherence suggests that a structural analysis of these
constructions can yield valuable insights into team logics from an algebraic
standpoint. Finally, by focusing on different components of our framework, we
outline several research directions that emerge naturally from this work.


The logic LTP is fully substitutional, making it possible to apply the
techniques offered by the algebraic study of logics. More precisely, in the
terminology of abstract algebraic logic~\cite{Font16}, and as described in the end of
Section \ref{sec:def-class}, LTP is a semilattice-based logic with an
assertional companion $\text{LTP}^\etop$, both satisfying deduction theorems.
We believe that both LTP and $\text{LTP}^\etop$ should be analysed further from
the perspective of abstract algebraic logic in future work, further
categorising the logics in an established framework.

To relate LTP to other propositional team logics we have identified a set of
principal variable axioms (PVA). This set constitutes a natural candidate for
axiomatising the denotational semantics of the valuational team logic PT$^+$
within the semantic framework of LTP, analogous to how classical propositional
logic is embedded within PT$^+$. It is therefore expected that similar
algebraic constructions and axiomatisations are possible for other types of
team semantics such as modal team semantics
\cite{Vaeaenaenen2008}. Furthermore, this axiomatisation provides a way to
construct proofs of the entailment statements of these propositional team
logics. It does not, however, directly constitute a natural deduction system for
the axiomatised logics per se, since the terms of these proofs will in general
not be confined to the syntactic fragment of the logics. Our natural
deduction system may, however, motivate and guide the
construction of deduction systems for these propositional team logics, and
indicates the suitability of labelled systems.

In the labelled natural deduction for LTP, the rules $\sub$ and $\taut$
establish a notion of equivalence of labels determined by classical
propositional logic. From this perspective these rules can be viewed as
structural rules of the deduction system. The rules for the formulas of LTP
consist of introduction rules together with the elimination rules that are
the direct inverses of the introduction rules (up to equivalence of
labels).\footnote{The rule $\ine$ is not directly the inverse of $\ini$ but
can be seen to be equivalent to a direct inverse rule. } In this sense, the
rules of LTP harmonise, and it is possible to use more advanced
proof-theoretic methods to investigate LTP. For example, it seems to be easy
to turn the system into a sequent system that could be analysed with respect
to cut rules and cut elimination. This analysis may lead up to a
proof-theoretic explanation of the internal connectives and so also of the
connectives in other propositional team logics through their
axiomatisations.

By interpreting the $\taut$ rule as expressing provability in an inner logic,
we can generalise the construction of the labelled natural deduction system
into a proof-theoretic method for assigning team-based semantics to a logic.
This perspective can be understood as combining two layers: an inner logic
and an outer logic. The logic LTP then arises as a special case in which both
layers are instances of classical propositional logic. This 
opens the door to a purely proof-theoretic approach to team logics and their
broader analogues, which we see as a promising direction for future
research. Notably, it suggests the potential for analogous constructions in a
first-order setting that could give new general insight
into first-order team logics, which is an active field of study with many
applications.

\subsection*{Funding}

This paper was written as part of the project: \textit{Foundations for team
semantics: Meaning in an enriched framework}, a research project supported by
grant 2022-01685 of the Swedish Research Council, Vetenskapsrådet.

\bibliographystyle{alpha}
\bibliography{refs}

\end{document}